\newtheorem{theorem}{Theorem}
\newtheorem{proposition}{Proposition}
\newtheorem{remark}{Remark}
\newtheorem{definition}{Definition}
\newtheorem{corollary}{Corollary}
\newtheorem{lemma}{Lemma}
\newcommand{\Lin}{\operatorname{Lin}}
\newcommand{\const}{\operatorname{const}}
\newcommand{\Ad}{\operatorname{Ad}}
\newcommand{\ad}{\operatorname{ad}}
\newcommand{\I}{\operatorname{I}}
\newcommand{\sgn}{\operatorname{sgn}}
\newcommand{\Exp}{\operatorname{Exp}}
\newcommand{\trace}{\operatorname{trace}}
\newcommand{\Mink}{\operatorname{Mink}}
\newcommand{\Arcch}{\operatorname{arcch}}
\begin{document}
\begin{flushleft}
UDK 519.46 + 514.763 + 512.81 + 519.9 + 517.911
\end{flushleft}
\begin{flushleft}
MSC 2010 22E30, 49J15, 53C17
\end{flushleft}

\title[group $SO_0(2,1)$]{(Locally) shortest arcs of special sub-Riemannian metric on the Lie group $SO_0(2,1)$}
\author{V.\,N.\,Berestovskii}
\thanks{The author was partially supported by the Russian Foundation for Basic Research (Grant 14-01-00068-а) and a grant of the Government of the Russian Federation for the State Support of Scientific Research (Agreement \No 14.B25.31.0029)}
\address{Sobolev Institute of Mathematics SD RAS}
\address{V.N.Berestovskii}
\email{vberestov@inbox.ru}
\maketitle
\maketitle {\small
\begin{quote}
\noindent{\sc Abstract.}
The author finds geodesics, shortest arcs, cut locus, and conjugate sets for left-invariant
sub-Riemannian metric on the Lie group $SO_0(2,1)$ under the condition that the metric is right-invariant relative to the
Lie subgroup $SO(2)\subset SO_0(2,1).$
\end{quote}}

{\small
\begin{quote}
\textit{Keywords and phrases:} geodesic, Lie algebra, Lie group,
left-invariant sub-Riemannian metric, shortest arc, geodesic.
\end{quote}}

\section*{Introduction}

In this paper we find geodesics and shortest arcs for left-invariant and $SO(2)$-right-invariant sub-Riemannian metric $d$ on the Lie-Lorentz group $SO_0(2,1),$ where $SO(2)\subset SO_0(2,1).$

One can give the following natural geometric description of the metric $d.$ The Lie group $SO_0(2,1)$ can be interpreted as an effective transitive group of all preserving orientation isometries of the Lobachevskii plane $L^2$ with constant Gaussian curvature $-1$ and hence as the space $L^2_1$ of unit tangent vectors on  $L^2$. The space $L^2_1$ admits natural Riemannian metric (scalar product) $g_1$ by Sasaki  (see \cite{Sas} or section $1K$ in Besse book \cite{Bes}). In addition, canonical projection $p: (L^2_1,g_1)\rightarrow L^2$ (or, which is equivalent, 
$p: SO_0(2,1)\rightarrow  SO_0(2,1)/SO(2)$) is a \textit{Riemannian submersion} \cite{Bes}. The metric $d$ is defined by totally nonholonomic distribution $D$ on $SO_0(2,1),$ which is orthogonal to fibers of submersion $p,$ and restriction of scalar product $g_1$ to $D.$

Moreover, canonical projection
\begin{equation}
\label{subm}
p: (SO_0(2,1),d)\rightarrow  L^2
\end{equation}
is a \textit{submetry} \cite{BG}, a natural generalization of Riemannian submersion. The distribution $D$ on $L^2_1$ is nothing other than the restriction to $L^2_1$ of horizontal distribution of Levi-Civita connection \cite{Bes} for $L^2.$ 

Geodesics and shortest arcs in $(SO_0(2,1),d)$ are found with the help of these ideas from article \cite{BerZ}, general methods of work  \cite{Ber1}, and Gauss-Bonnet theorem \cite{Pog} for $L^2.$

All results of the paper are supplied with complete proofs. Along with presentation of geodesics with origin at unit in the form of product of two 1-parameter subgroups and in explicit matrix form, we also use their geometric interpretation
given in section \ref{lor}.

\begin{center}
\section{Preliminaries}
\label{prel}
\end{center}

\textit{Pseudoeuclidean space $E^{n,1}$} or \textit{Minkowski space-time $\Mink^{n+1}$}, where $n+1\geq 2,$ is vector space $\mathbb{R}^{n+1}$ with
\textit{pseudoscalar product} $\{(t,x),(s,y)\}:=-ts+(x,y).$ Here $(x,y)=xy^T$ is the \textit{standard scalar product of vectors $x,y\in \mathbb{R}^n.$} \textit{The Lorentz group} $SO_0(n,1)$ is the connected component of unit in group $P(n,1)$ of all linear \textit{pseudoisometric} (i.e. preserving pseudoscalar product 
$\{\cdot,\cdot\}$) transformations of the space $\Mink^{n+1}.$

Evidently,
\begin{equation}
\label{ps}
\{(t,x),(s,y)\}=((-t,x),(s,y))=((t,x)I,(s,y))=(t,x)I(s,y)^T,
\end{equation}
where $I$ is the matrix of linear mapping $(t,x)\rightarrow (-t,x),$ i.e. \textit{time reversing operator}. It follows from formula (\ref{ps}) that $A\in P(n,1)$ if and only if 
$$(t,x)I(s,y)^T=(t,x)AI((s,y)A)^T=(t,x)AIA^T(s,y)^T\Leftrightarrow AIA^T=I$$
for any $(t,x),(s,y)\in \mathbb{R}^{n+1},$ which is equivalent to 
\begin{equation}
\label{cps}
AIA^TI=E_{n+1},
\end{equation}
where $E_{n+1}=e$ is unit $(n+1)\times(n+1)-$matrix (we used the identity 
$I^2=E_{n+1}$).

\begin{remark}
\label{dif}
Unlike isometry group of Euclidean space $E^{n+1},$ which has two connected components (consisting respectively of orthogonal matrices preserving or not preserving orientation of the space $E^{n+1}$) the group $P(n,1)$ has 4 connected components, because conditions of (not) preserving time direction and (not) preserving orientation of the space $E^{n,1}$ are mutually independent for matrices
from $P(n,1)$. The group $SO_0(n,1)$ consists of those elements in $P(n,1)$ which
simultaneously preserve time direction and orientation of the space $E^{n,1}$.
\end{remark}

Let $A=A(s),$ $-\varepsilon< s< \varepsilon,$ be a continuously differentiable curve in $SO_0(n,1)$ such that $A(0)=e,$ $A'(0)=a.$ Then differentiation of identity  (\ref{cps}) at $s=0$ gives equality
$$a+Ia^TI=0\Leftrightarrow Ia + a^TI=0\Leftrightarrow Ia + (Ia)^T=0.$$
Conversely it is not difficult to prove that if a matrix $а$ satisfies this condition then matrix exponent $\exp(sa)\in SO_0(n,1)$ for all $s\in \mathbb{R}.$
Consequently, since $I^2=E_{n+1}$, then Lie algebra $\frak{so}(n,1)$ of Lie groups
$P(n,1)$ and $SO_0(n,1)$ is defined by the following equality
\begin{equation}
\label{son}
\frak{so}(n,1)= I\cdot\frak{so}(n).
\end{equation}
As a corollary,
\begin{equation}
\label{T}
(\frak{so}(n,1))^T= \frak{so}(n,1).
\end{equation}

The Lie group $Sl(n)\subset Gl(n)$ of all real $(n\times n)-$matrices
with determinant 1 is a closed connected subgroup of Lie group $Gl(n)$ with
the Lie algebra
\begin{equation}
\label{sl}
\frak{sl}(n)=\{a\in \frak{gl}(n,\mathbb{R}): \trace(a)=\sum_{l=1}^n a_{ll}=0.\}
\end{equation}

We shall be interested in the case $n=2.$ In view of equality (\ref{son}) 
matrices
\begin{equation}
\label{abc1}
a=e_{12}+e_{21},\quad b=e_{13}+e_{31},\quad c=e_{32}-e_{23},
\end{equation}
where \textit{$e_{ij}$ is quadratic matrix which has 1 in $i$-th row and $j$-th column and 0 in all other places}, constitute a basis of the Lie algebra
$\frak{so}(2,1).$ In addition, taking into account that $[a,b]=ab-ba$ etc., it is easy to find
\begin{equation}
\label{abca1}
[a,b]=-c, \quad [b,c]=a,\quad [c,a]=b.
\end{equation}
Analogously, in view of (\ref{sl}), matrices
\begin{equation}
\label{abc2}
a'=\frac{1}{2}(e_{12}+e_{21}),\quad b'=\frac{1}{2}(e_{11}-e_{22}),\quad c'=\frac{1}{2}(e_{12}-e_{21})\in \frak{sl}(2)
\end{equation}
constitute a basis of the Lie algebra $\frak{so}(2,1).$ Moreover,
\begin{equation}
\label{abca2}
[a',b']=-c', \quad [b',c']=a',\quad [c',a']=b'.
\end{equation}

\begin{theorem}
\label{liso}
Linear map $l: \frak{sl}(2)\rightarrow \frak{so}(2,1)$ such that
\begin{equation}
\label{l}
l(a')=a,\quad l(b')=b,\quad l(c')=c,
\end{equation}
is an isomorphism of Lie algebras. Furthermore, formula
\begin{equation}
\label{L}
L(\exp(w))=\exp(l(w)), w\in \frak{sl}(2),
\end{equation}
correctly defines epimorphism of Lie groups $L: Sl(2)\rightarrow SO_0(2,1)$ with
the kernel $\ker L= \{\pm E_2\}.$
\end{theorem}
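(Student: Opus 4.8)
The plan is to treat the two assertions separately. For the first, since $l$ carries the basis $(a',b',c')$ of $\mathfrak{sl}(2)$ to the basis $(a,b,c)$ of $\mathfrak{so}(2,1)$, it is automatically a linear isomorphism; to see that it respects brackets I would only compare the structure constants in (\ref{abca2}) with those in (\ref{abca1}). Indeed $l[a',b']=l(-c')=-c=[a,b]=[l(a'),l(b')]$, and likewise for the other two brackets, so by bilinearity and antisymmetry $l$ preserves every bracket. This step is routine.

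For the second assertion the first task is the mere existence of a homomorphism integrating $l$. By the standard correspondence between Lie-algebra and Lie-group homomorphisms, $l$ integrates to a homomorphism $\widetilde{L}\colon\widetilde{Sl(2)}\to SO_0(2,1)$ on the simply connected covering group, with $d\widetilde{L}=l$; concretely, near the unit the rule $\exp(w)\mapsto\exp(l(w))$ is a local homomorphism because, by the Baker--Campbell--Hausdorff formula, $\exp(w_1)\exp(w_2)=\exp(H(w_1,w_2))$ with $H$ a universal series in iterated brackets, and $l(H(w_1,w_2))=H(l(w_1),l(w_2))$ since $l$ respects brackets. What genuinely uses the geometry of the situation is that this local rule is \emph{single-valued} on $Sl(2)$ itself, i.e. that $\widetilde{L}$ descends along the covering $\widetilde{Sl(2)}\to Sl(2)$.

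I expect this descent to be the main obstacle, and I would settle it by an explicit computation in the compact direction. The one-parameter subgroup $\exp(tc')$ is the circle $SO(2)\subset Sl(2)$ and has period $4\pi$ (the matrix entries are $\cos(t/2),\sin(t/2)$), so the loop $t\mapsto\exp(tc')$, $t\in[0,4\pi]$, generates $\pi_1(Sl(2))\cong\mathbb{Z}$. Its image under the rule is the path $t\mapsto\exp(tc)$, and since $c$ generates the rotations of the spacelike $2$--$3$ plane one has $\exp(4\pi c)=E_3$. Hence the generator of the kernel of $\widetilde{Sl(2)}\to Sl(2)$ lies in $\ker\widetilde{L}$, so $\widetilde{L}$ factors through a well-defined homomorphism $L\colon Sl(2)\to SO_0(2,1)$. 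The same computation with period $2\pi$ gives $-E_2=\exp(2\pi c')$ and $L(-E_2)=\exp(2\pi c)=E_3=e$, so $-E_2\in\ker L$.

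It then remains to show $L$ is onto with kernel exactly $\{\pm E_2\}$. Since $dL_e=l$ is a linear isomorphism, $L$ is a local diffeomorphism at the unit, so its image is an open---hence also closed---subgroup of the connected group $SO_0(2,1)$, and therefore all of it: $L$ is an epimorphism. Being a local diffeomorphism at $e$, $L$ also has discrete kernel; a discrete normal subgroup of a connected group is central, so $\ker L\subseteq Z(Sl(2))=\{\pm E_2\}$. Combined with $-E_2\in\ker L$ from the previous step, this yields $\ker L=\{\pm E_2\}$, completing the proof.
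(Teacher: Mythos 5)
Your proof is correct, but for the second assertion it takes a genuinely different route from the paper's. The paper handles the bracket check exactly as you do (comparing the structure constants (\ref{abca1}) and (\ref{abca2})), but then justifies the existence, surjectivity and kernel of $L$ by a geometric identification: both $SO_0(2,1)$ and $Sl(2)/\{\pm E_2\}$ are realized as the full group of orientation-preserving isometries of the Lobachevskii plane in two of its models (the conformal upper half-plane and the hyperboloid, developed in the paper's Sections~\ref{conf} and~\ref{lor}), with the stabilizer circles $\exp(\mathbb{R}c')/\{\pm E_2\}$ and $\exp(\mathbb{R}c)$ corresponding to one another; the only explicit computation retained is $L(-E_2)=\exp(2\pi c')\mapsto\exp(2\pi c)=E_3$. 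You instead run the standard Lie-theoretic machine: integrate $l$ to $\widetilde L$ on the universal cover, verify that the generator of $\pi_1(Sl(2))$ --- the $4\pi$-periodic loop $\exp(tc')$ --- is sent to the closed loop $\exp(tc)$ so that $\widetilde L$ descends, obtain surjectivity from openness of the image in the connected target, and pin down the kernel as a discrete, hence central, subgroup containing $-E_2$. Your version is self-contained and independent of the later geometric sections, at the price of invoking without proof two standard facts: that $\exp(tc')$, $0\le t\le 4\pi$, generates $\pi_1(Sl(2))\cong\mathbb{Z}$ (e.g.\ via the Iwasawa decomposition $Sl(2)\cong SO(2)\times\mathbb{R}^2$) and that $Z(Sl(2))=\{\pm E_2\}$. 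The paper's version is shorter but leans on forward references to its model-theoretic sections; both arguments establish the theorem.
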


\begin{proof}
First statement is a corollary of equalities (\ref{abca1}), (\ref{abca2}). Second statement follows from the first one and the fact that both Lie groups $SO_0(2,1)$ and $Sl(2)/\{\pm E_2\}$ are realized as effective full groups of 
preserving orientation of Lobachevskii plane of constant sectional curvature $-1$ for two of its H.Poincare models with stabilizers $\exp(\mathbb{R}c)$ and
$\exp(\mathbb{R}c')/\{\pm E_2\}$, isomorphic to $SO(2)$ (see sections \ref{conf} and \ref{lor}). Now note only that
$$L(-E_2)=L(\exp(2\pi c'))=\exp(2\pi c)=e_{11}+\cos (2\pi)(e_{22}+e_{33})+\sin(2\pi)c=E_{3}.$$
\end{proof}

Let $G$ and $H$ be Lie groups with Lie algebras $\frak{g}$ and $\frak{h}$; 
$\phi: G\rightarrow H$ is a Lie groups homomorphism. Then
\begin{equation}
\label{fe}
\phi \circ \exp_{\frak{g}} = \exp_{\frak{h}}\circ d\phi_e,
\end{equation}
moreover,
\begin{equation}
\label{dfe}
d\phi_e : (\frak{g},[\cdot,\cdot]) \rightarrow (\frak{h},[\cdot,\cdot])
\end{equation}
is a Lie algebra homomorphism (see lemma 1.12 in \cite{Hel}). If 
$g_0\in G$ then $\I(g_0): G\rightarrow G,$ where $\I(g_0)(g)=g_0gg_0^{-1}$ is inner automorphism of the Lie group $G.$ Consequently, 
$\Ad(g_0):=d\I(g_0)_e\in Gl(\frak{g})$ is automorphism of the Lie algebra 
$\frak{g}$ and $d\Ad_e(v):=\ad(v):=[v,\cdot]$ for $v\in \frak{g}$ \cite{Hel}. Therefore, on the ground of formula (\ref{fe}),
\begin{equation}
\label{I}
\I(g_0)\circ \exp= \exp \circ \Ad(g_0),
\end{equation}
\begin{equation}
\label{ad}
\Ad(\exp_{\frak{g}}(v))= \exp_{\frak{gl}(\frak{g})}(\ad(v)), v \in \frak{g}.
\end{equation}

\textit{Later $\Lin(a,b)$ will denote linear span of vectors $a,b.$ As an auxiliary   tool we shall use standard scalar product $(\cdot,\cdot)$ on Lie algebra
$\frak{gl}(n)=\mathbb{R}^{n^2}$}.

In case of left-invariant sub-Riemannian metrics on Lie groups, every geodesic
is a left shift of some geodesic which starts at the unit. Thus later we shall consider only geodesics with unit origin. Theorem 5 in paper \cite{Ber1} implies
the following theorem.

\begin{theorem}
\label{general}
Let $G$ be a connected Lie subgroup of the Lie group $Gl(n)$ with the Lie algebra
$\frak{g},$ $D$ is totally nonholonomic left-invariant distribution on $G,$ a scalar product $\langle \cdot,\cdot\rangle$ on $D(e)$ is proportional to restriction of the scalar product $(\cdot,\cdot)$ (to $D(e)$).
Then parametrized by arclength normal geodesic (i.e. locally shortest arc) 
$\gamma=\gamma(t),$ $t\in (-a,a)\subset \mathbb{R},$ $\gamma(0)=e,$ on $(G,d)$ with left-invariant sub-Riemannian metric $d$, defined by distribution $D$ and scalar product $\langle\cdot,\cdot\rangle$ on $D(e),$ satisfies the system of ordinary differential equations 
\begin{equation}
\label{dxsp2}
\stackrel{\cdot}\gamma(t)=\gamma(t)u(t),\,\,u(t)\in D(e)\subset \frak{g},\,\,\langle u(t),u(t)\rangle\equiv 1,
\end{equation}
\begin{equation}
\label{vot1}
pr_{\frak{g}}([u(t)^T,u(t)]+[u(t)^T,v(t)])=\stackrel{\cdot}u(t)+\stackrel{\cdot}v(t),
\end{equation}
where $u=u(t),$ $v=v(t)\in \frak{g},$ $(v(t),D(e))\equiv 0,$ $t\in (a,b)\subset \mathbb{R},$ are some real-analytic vector functions.
\end{theorem}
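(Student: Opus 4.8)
The plan is to realize (\ref{dxsp2})--(\ref{vot1}) as the left-trivialized (Lie--Poisson, or Euler--Arnold) form of the normal Hamiltonian system produced by the Pontryagin Maximum Principle --- which is precisely the content that Theorem~5 of \cite{Ber1} specializes to this matrix setting --- and then to make the coadjoint term explicit from the fact that the ambient metric is the standard scalar product $(\cdot,\cdot)$ on $\frak{gl}(n)$. A normal sub-Riemannian geodesic is the base projection of an integral curve of the maximized Hamiltonian $H(\lambda)=\max_{w\in D}\bigl(\lambda(w)-\tfrac12\langle w,w\rangle\bigr)$ on $T^{*}G$. Since $D$ and $\langle\cdot,\cdot\rangle$ are left-invariant, $H$ is left-invariant, so I would left-translate the costate $\lambda(t)$ to $\frak g^{*}$ and identify $\frak g^{*}\cong\frak g$ through the restriction of $(\cdot,\cdot)$, obtaining a momentum curve $p(t)\in\frak g$. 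Equation (\ref{dxsp2}) is then immediate: the maximizing control equals the horizontal part of $p$, giving $\dot\gamma=\gamma u$ with $u\in D(e)$, and parametrizing by arclength fixes $\langle u,u\rangle\equiv1$. For the main computation I normalize $\langle\cdot,\cdot\rangle=(\cdot,\cdot)|_{D(e)}$; the proportional case is recovered at the end.

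Next I would split $p=u+v$ as the $(\cdot,\cdot)$-orthogonal decomposition into its horizontal part $u=pr_{D(e)}p\in D(e)$ and its vertical part $v=pr_{D(e)^{\perp}}p$, so that $(v,D(e))\equiv0$, which reproduces the constraints imposed on $v$ in the statement. Because $H$ is left-invariant and $dH(p)=u$, the reduced dynamics is the Lie--Poisson equation $\dot p=\ad^{*}_{u}p$, where $\ad^{*}_{u}$ is the coadjoint operator taken with respect to $(\cdot,\cdot)$ and the sign is the one of the standard left Euler--Arnold equation (checked, e.g., against the free rigid body). As $p=u+v$ with $u,v\in\frak g$, the right-hand side of (\ref{vot1}) is simply $\dot p=\dot u+\dot v$.

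The heart of the argument is the explicit shape of $\ad^{*}_{u}$ for the matrix scalar product. Writing $(X,Y)=\trace(X^{T}Y)$ and using cyclicity of the trace, one verifies the identity $([u,Y],Z)=(Y,[u^{T},Z])$ for all $Y,Z\in\frak{gl}(n)$; thus on $\frak{gl}(n)$ the operator adjoint of $\ad_{u}$ is $Z\mapsto[u^{T},Z]$, and since the pairing is restricted to the subalgebra $\frak g$ this forces the orthogonal projection $\ad^{*}_{u}=pr_{\frak g}\circ[u^{T},\cdot]$. Substituting $p=u+v$ gives $\ad^{*}_{u}p=pr_{\frak g}\bigl([u^{T},u]+[u^{T},v]\bigr)$, which together with $\dot p=\dot u+\dot v$ is precisely (\ref{vot1}). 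The real-analyticity of $u$ and $v$ is automatic, since they are components of the solution of a Hamiltonian system whose right-hand side is polynomial in $p$.

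I expect the transpose-plus-projection to be the delicate point: $u^{T}$ need not lie in $\frak g$ (for $\frak{so}(2,1)$ it does, by (\ref{T}), but the theorem is stated for a general subgroup), so the bracket $[u^{T},\cdot]$ a priori leaves $\frak g$ and must be pushed back by $pr_{\frak g}$ to yield an equation intrinsic to $G$; keeping this projection is exactly what distinguishes the matrix-group form (\ref{vot1}) from the bare Lie--Poisson equation. A secondary bookkeeping issue is the proportionality constant in $\langle\cdot,\cdot\rangle=\kappa\,(\cdot,\cdot)|_{D(e)}$: it rescales the relation between the control and the momentum, but the resulting constant factor multiplies the vertical momentum uniformly and may be absorbed into the free vertical function $v$, so that (\ref{vot1}) holds in the stated form while the unit-speed normalization $\langle u,u\rangle\equiv1$ is preserved.
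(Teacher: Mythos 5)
Your derivation is correct: the identity $([u,Y],Z)=(Y,[u^{T},Z])$ for the trace form, the orthogonal splitting $p=u+v$, and the absorption of the proportionality constant into $v$ all check out, and they yield exactly (\ref{dxsp2})--(\ref{vot1}). The paper itself gives no proof here --- it simply states that the theorem is implied by Theorem~5 of \cite{Ber1} --- and your Pontryagin/Lie--Poisson argument is precisely the standard content of that cited result, so this counts as the same approach, merely written out in full.
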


Equations (\ref{dxsp2}), (\ref{vot1}) imply

\begin{corollary}
\label{glob}
Every parametrized by arclength geodesic in $(G,d)$ is a part of unique parametrized by arclength geodesic $\gamma=\gamma(t),$ $t\in \mathbb{R},$ in
$(G,d).$
\end{corollary}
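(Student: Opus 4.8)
The plan is to read the system (\ref{dxsp2})--(\ref{vot1}) as an ordinary differential equation with real-analytic right-hand side, to solve it locally by the Cauchy theorem, and then to show that its solutions never escape to infinity in finite time, so that every local geodesic extends uniquely over all of $\mathbb{R}$. First I would let $m$ denote the orthogonal complement of $D(e)$ in $\frak{g}$, so that $\frak{g}=D(e)\oplus m$ and, by Theorem \ref{general}, $u(t)\in D(e)$ and $v(t)\in m$. Writing $w:=u+v\in\frak{g}$ and using $[u^{T},u]+[u^{T},v]=[u^{T},w]$, equation (\ref{vot1}) collapses to the single autonomous equation
\[
\dot w \;=\; pr_{\frak{g}}\bigl[(pr_{D(e)}w)^{T},\,w\bigr] \;=:\; F(w),
\]
whose projections onto $D(e)$ and $m$ recover the equations for $\dot u$ and $\dot v$. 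Since $F$ is a homogeneous quadratic polynomial in the entries of $w$, it is real-analytic, so through every $w_{0}\in\frak{g}$ there passes a unique maximal solution. Feeding $u(t)=pr_{D(e)}w(t)$ into (\ref{dxsp2}) and prescribing $\gamma(0)$, the linear equation $\dot\gamma=\gamma u(t)$ has a unique solution staying in $G$ because $u(t)\in\frak{g}$. Thus any geodesic segment, together with its data $(u,v,\gamma)$ at one point, is locally determined; the whole difficulty is to push the interval of definition out to all of $\mathbb{R}$.

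The decisive observation I would establish next is that $|u(t)|$ is constant. From the $D(e)$-projection of (\ref{vot1}) one has $\dot u=pr_{D(e)}[u^{T},w]$, and since $u\in D(e)$ and the projection is orthogonal, $\tfrac12\frac{d}{dt}(u,u)=(u,[u^{T},w])$. Expanding $[u^{T},w]=u^{T}w-wu^{T}$ and using $(A,B)=\trace(AB^{T})$ together with the cyclic invariance of the trace, both resulting terms equal $\trace(u^{2}w^{T})$, so their difference vanishes and $(u,u)$ is a first integral. This is precisely the consistency of the equations with the arclength normalization $\langle u,u\rangle\equiv 1$; what matters here is that it confines $u(t)$ to a fixed sphere for the entire interval of existence.

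Finally I would obtain global existence by a Gr\"onwall estimate on the remaining variable. Projecting (\ref{vot1}) onto $m$ gives $\dot v=pr_{m}[u^{T},u]+pr_{m}[u^{T},v]$, which, now that $u$ is bounded, is an inhomogeneous \emph{linear} equation in $v$ whose inhomogeneous term and linear operator have norms bounded by constants depending only on $|u|=|u(0)|$ and the structure constants of $\frak{g}$. Gr\"onwall's inequality then yields $|v(t)|\le(|v(0)|+C)e^{C|t|}$, so neither $v$ nor $u$ can blow up in finite time, and the maximal interval of the solution $w=u+v$ is therefore $\mathbb{R}$. Integrating $\dot\gamma=\gamma u(t)$ over all of $\mathbb{R}$ extends $\gamma$ to a geodesic defined on $\mathbb{R}$, and the uniqueness of solutions of the autonomous system for $w$ and of the linear system for $\gamma$ shows that any two $\mathbb{R}$-parametrized geodesics sharing a common segment have equal data at a common point and hence coincide. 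I expect the only genuine obstacle to be the a priori possibility of finite-time blow-up of the quadratic field $F$; the conservation of $|u|$ is what removes it, by linearizing the surviving equation for $v$.
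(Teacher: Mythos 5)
The paper offers no proof of this corollary beyond the phrase ``Equations (\ref{dxsp2}), (\ref{vot1}) imply,'' so your proposal is best read as an elaboration of the intended argument rather than an alternative to it. The elaboration is essentially correct: the reduction to the single quadratic equation $\dot w=pr_{\frak{g}}[(pr_{D(e)}w)^{T},w]$ is legitimate because $u=pr_{D(e)}w$ and $v=pr_{m}w$ for the fixed orthogonal splitting $\frak{g}=D(e)\oplus m$; the computation $(u,[u^{T},w])=\trace(u^{2}w^{T})-\trace(u^{2}w^{T})=0$ (using self-adjointness of the orthogonal projections to drop them) does show that $|u|$ is a first integral, consistent with the normalization $\langle u,u\rangle\equiv 1$; and the resulting bound $|\dot v|\le C_{1}+C_{2}|v|$ plus Gr\"onwall rules out finite-time blow-up of $w$, after which $\dot\gamma=\gamma u(t)$ is a linear equation with bounded coefficients, so $\gamma$ extends to all of $\mathbb{R}$ and remains in $G$.

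The one step you pass over too quickly is the \emph{uniqueness} of the extension. Two arclength geodesics sharing a common segment certainly share $\gamma$ and $u=\gamma^{-1}\dot\gamma$ on that segment, but a priori the segment could admit two distinct multipliers $v\neq v'$ satisfying (\ref{vot1}), and the corresponding solutions $w$ could then diverge beyond the segment; ``equal data at a common point'' is exactly the assertion that needs an argument, and it does not follow from uniqueness for the ODE alone. You can close this gap either by observing that every full-line normal geodesic is a real-analytic curve (being the solution of a real-analytic system), so that two of them agreeing on a segment agree on all of $\mathbb{R}$; or, in the concrete situation the paper actually uses ($\frak{so}(2,1)$ with $v=\beta c$), by noting that the $D(e)$-projection of (\ref{vot1}) yields $\dot\phi=-\beta$, so the multiplier is recovered from $u$ and the ambiguity does not arise. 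With that point repaired, your argument is a complete proof of the statement the paper leaves unproved.
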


\begin{center}
\section{Search of geodesics in $(SO_0(2,1),d)$}
\end{center}

\begin{theorem}
\label{main}
Let be given the basis (\ref{abc1}) of the Lie algebra $\frak{so}(2,1),$ $D(e)=\Lin(a,b),$ and scalar product $\langle\cdot,\cdot\rangle$ on $D(e)$ 
with orthonormal basis $a,b.$ Then left-invariant distribution $D$ on the Lie group 
$SO_0(2,1)$ with given $D(e)$ is totally nonholonomic and the pair 
$(D(e),\langle\cdot,\cdot\rangle)$ defines left-invariant sub-Riemannian metric $d$ on $SO_0(2,1).$ Moreover, any parametrized by arclength geodesic $\gamma=\gamma(t),$
$t\in \mathbb{R},$ in $SO_0(2,1)$ with condition $\gamma(0)=e$ is a product of two 1-parameter subgroups:
\begin{equation}
\label{sol}
\gamma(t)=\exp(t(\cos\phi_0a + \sin \phi_0b - \beta c)) \exp(t\beta c),
\end{equation}
where $\phi_0,$ $\beta$ are some arbitrary constants.
\end{theorem}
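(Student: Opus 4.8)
The plan is to apply Theorem \ref{general} directly, exploiting the special structure of the basis (\ref{abc1}). First I would dispatch the two preliminary claims in the statement. Total nonholonomicity is immediate from (\ref{abca1}): since $[a,b]=-c$, the subspace $D(e)+[D(e),D(e)]$ already contains $a,b,c$ and hence equals all of $\mathfrak{so}(2,1)$, so $D$ is bracket-generating of step $2$. The scalar-product hypothesis of Theorem \ref{general} holds because, in the standard product $(\cdot,\cdot)$ on $\mathfrak{gl}(3)$, a short computation from (\ref{abc1}) shows $a,b,c$ are pairwise orthogonal with $(a,a)=(b,b)=(c,c)=2$; thus $\langle\cdot,\cdot\rangle=\tfrac12(\cdot,\cdot)$ on $D(e)$ and is indeed proportional to the restriction of the standard product, so $(D(e),\langle\cdot,\cdot\rangle)$ defines the asserted metric $d$.

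With Theorem \ref{general} in force, I would parametrize the unknowns in the system (\ref{dxsp2})--(\ref{vot1}). The unit-length constraint on $u(t)\in D(e)$ forces $u(t)=\cos\phi(t)\,a+\sin\phi(t)\,b$ for a real-analytic function $\phi(t)$, while the condition $(v(t),D(e))\equiv 0$, combined with the orthogonality of $c$ to $a,b$ noted above, forces $v(t)=\mu(t)\,c$ for a scalar function $\mu(t)$.

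The decisive simplification comes from transposition. From (\ref{abc1}) the generators $a,b$ are symmetric and $c$ is antisymmetric, so $u(t)^T=u(t)$ and therefore $[u(t)^T,u(t)]=[u(t),u(t)]=0$: the nonlinear term in (\ref{vot1}) vanishes identically. Moreover, by (\ref{T}) every element of $\mathfrak{so}(2,1)$ keeps its transpose inside $\mathfrak{so}(2,1)$, so all brackets appearing already lie in $\mathfrak{g}$ and the projection $pr_{\mathfrak{g}}$ acts as the identity. It then remains to compute $[u^T,v]=[u,\mu c]$ from the structure constants (\ref{abca1}), obtaining $\mu\sin\phi\,a-\mu\cos\phi\,b$, and to expand $\dot u+\dot v=-\dot\phi\sin\phi\,a+\dot\phi\cos\phi\,b+\dot\mu\,c$. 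Matching the coefficients of $a,b,c$ reduces (\ref{vot1}) to the elementary system $\dot\mu=0$ and $\dot\phi=-\mu$, whence $\mu\equiv\beta$ is constant and $\phi(t)=\phi_0-\beta t$.

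Finally I would reconstruct the geodesic from $u(t)=\cos(\phi_0-\beta t)\,a+\sin(\phi_0-\beta t)\,b$. Setting $w:=\cos\phi_0\,a+\sin\phi_0\,b-\beta c$, I would verify that the proposed product $\gamma(t)=\exp(tw)\exp(t\beta c)$ satisfies $\gamma^{-1}\dot\gamma=u(t)$: differentiating and using that $w$ commutes with $\exp(tw)$ gives $\gamma^{-1}\dot\gamma=\Ad(\exp(-t\beta c))(w+\beta c)=\Ad(\exp(-t\beta c))(\cos\phi_0\,a+\sin\phi_0\,b)$, after the cancellation $w+\beta c=\cos\phi_0\,a+\sin\phi_0\,b$. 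Since $\ad(c)$ acts on $\Lin(a,b)$ as the rotation generator (because $[c,a]=b$ and $[c,b]=-a$), formula (\ref{ad}) identifies this with $\cos(\phi_0-\beta t)\,a+\sin(\phi_0-\beta t)\,b=u(t)$, as required; uniqueness then follows from Corollary \ref{glob}. The only real obstacle I anticipate is bookkeeping in this last step — keeping the signs in the rotation action of $\ad(c)$ and the cancellation $w+\beta c$ straight — because the analytic core of the argument collapses the instant one observes $u^T=u$.
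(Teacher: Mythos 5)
Your proof follows the paper's own argument essentially step for step: the same reduction of (\ref{vot1}) via $u(t)^T=u(t)$ and $v(t)=\mu(t)c$ to the system $\dot\mu=0$, $\dot\phi=-\mu$, and the same $\Ad$--$\ad$ computation verifying that $\exp(tw)\exp(t\beta c)$ solves $\dot\gamma=\gamma u(t)$ (your folding of $\beta c$ into the adjoint, using $\Ad(\exp(-t\beta c))(\beta c)=\beta c$, is a marginally tidier version of the paper's explicit cancellation of $-\beta c+\beta c$). The one ingredient you omit is the justification that \emph{every} geodesic is normal, which is needed before Theorem \ref{general} can be invoked for ``any'' geodesic rather than only for normal ones; the paper supplies this by citing theorem 3 of \cite{Ber1} (on a three-dimensional Lie group with left-invariant sub-Riemannian metric all geodesics are normal), and without some such remark --- or an argument that a contact-type, corank-one step-two distribution admits no strictly abnormal minimizers --- your proof establishes the stated form only for normal geodesics.
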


\begin{proof}
The first statement of theorem follows from formula (\ref{abca1}).

It is clear that on $D(e)$
\begin{equation}
\label{scal}
\langle\cdot,\cdot\rangle =\frac{1}{2}(\cdot,\cdot).
\end{equation}
In consequence of theorem 3 in \cite{Ber1} every geodesic on 3-dimensional Lie group with left-invariant sub-Riemannian metric is normal. Then it follows from theorem \ref{general} that one can apply ODE
(\ref{dxsp2}),(\ref{vot1}) to find geodesics $\gamma=\gamma(t), t\in \mathbb{R},$ in $(SO(3),d)$.

It is clear that
\begin{equation}
\label{the}
u(t)=\cos\phi(t)a +\sin\phi(t)b,\quad v(t)=\beta(t)c,
\end{equation}
and the identity (\ref{vot1}) is written in the form
$$[\cos\phi(t)a +\sin\phi(t)b, \beta(t)c]=
\stackrel{\cdot}\phi(t)(-\sin\phi(t)a +\cos\phi(t)b)+
\stackrel{\cdot}\beta(t)c.$$
In consequence of (\ref{abca1}), expression in the left part of equality is equal to 
$$-\beta(t)(\cos\phi(t)b-\sin\phi(t)a).$$
We get identities $\stackrel{\cdot}\beta(t)=0,$
$\stackrel{\cdot}\phi(t)=-\beta(t).$ Hence
\begin{equation}
\label{cond}
\beta=\beta(t)=\const,\quad \phi(t)=\phi_0 -\beta t.
\end{equation}

In view of (\ref{dxsp2}), (\ref{the}), and (\ref{cond}), it must be
\begin{equation}
\label{dx}
\stackrel{\cdot}\gamma(t)=\gamma(t)(\cos (\phi_0- \beta t)a + \sin (\phi_0 - \beta t)b).
\end{equation}
Let us prove that (\ref{sol}) is a solution of ODE (\ref{dx}).
One can easily deduce from formulae (\ref{abca1}), (\ref{abc1}) equalities
\begin{equation}
\label{adm}
(\ad (c))=Ia,\quad (\ad (b))=b, \quad (\ad (a))=-(e_{23}+e_{32}),
\end{equation}
where $(f)$ denotes the matrix of linear map 
$f: \frak{so}(2,1)\rightarrow \frak{so}(2,1)$ in the base $a,b,c;$ later $(f)$ is identified with $f$. On the ground of formulae (\ref{ad}), (\ref{adm}), (\ref{cond}), (\ref{the})
$$\stackrel{\cdot}\gamma(t)=\exp(t(\cos\phi_0a + \sin \phi_0b - \beta c))(\cos\phi_0a + \sin \phi_0b - \beta c)
\exp(t\beta c)+$$
$$\gamma(t)(\beta c)=\gamma(t)\exp(-t\beta c)(\cos\phi_0a + \sin \phi_0b - \beta c)\exp(t\beta c)
+\gamma(t)(\beta c)=$$
$$\gamma(t)\exp(-t\beta c)(\cos\phi_0a + \sin \phi_0b)\exp(t\beta c)+\gamma(t)(-\beta c)+\gamma(t)(\beta c)=$$
$$\gamma(t)\cdot[\Ad(\exp(-t\beta c))(\cos\phi_0a + \sin \phi_0b)]=
\gamma(t)\cdot[\exp(\ad(-t\beta c))(\cos\phi_0a + \sin \phi_0b)]=$$
$$\gamma(t)\cdot[\exp(-t\beta(\ad(c)))(\cos\phi_0a + \sin \phi_0b)]=
\gamma(t)\cdot[\exp(-t\beta (Ia))(\cos\phi_0a + \sin \phi_0b)]=$$
$$\gamma(t)\cdot (\cos (\phi_0- \beta t)a + \sin (\phi_0-\beta t)b)=\gamma(t)u(t).$$
\end{proof}

\begin{remark}
Both 1-parameter subgroups from formula (\ref{sol}) are nowhere tangent to distribution $D$ for $\beta\neq 0$ so that any their interval has infinite length in metric $d.$
\end{remark}

\begin{remark}
\label{change}
To change a sign of $\beta$ in (\ref{sol}) is the same as to change a sign of $t$ 
and to change angle $\phi_0$ by angle $\phi_0\pm \pi.$
\end{remark}

\begin{remark}
\label{Ad}
For any matrix $B\in SO(2)=\exp(\mathbb{R}c),$ the map $l_B\circ r_{B^{-1}}$,
where $l_B$ is multiplication on the left by $B$, $r_{B^{-1}}$ is multiplication on the left by $B^{-1}$, is simultaneously automorphism $\Ad B$ of the Lie algebra 
$(\frak{so}(2,1),[\cdot,\cdot]),$ preserving $\langle\cdot,\cdot\rangle,$ and automorphism of the Lie group $SO_0(2,1),$ preserving distribution $D$ and metric
$d.$ In particular in view of (\ref{ad}),
(\ref{adm})
$$\Ad B (a -\beta c)=\exp(\phi_0 (Ia))(a -\beta c)=\cos\phi_0a + \sin \phi_0b - \beta c,$$ if
\begin{equation}
\label{AA}
B=\exp(\phi_0c)=\left(\begin{array}{ccc}
1 & 0 & 0\\
0 & \cos \phi_0 & -\sin \phi_0 \\
0 &   \sin \phi_0  & \cos \phi_0
\end{array}\right).
\end{equation}
\end{remark}

\begin{proposition}
\label{maint}
Let $\gamma(t),$ $t\in \mathbb{R},$ be geodesic in $(SO_0(2,1),d)$ defined by formula (\ref{sol}). Then for any $t_0\in \mathbb{R},$
\begin{equation}
\label{sol1}
\gamma(t_0)^{-1}\gamma(t)=\exp((t-t_0)(\cos(\beta t_0+\phi_0)a + \sin(\beta t_0+\phi_0)b -\beta c))
\exp((t-t_0)\beta c).
\end{equation}
\end{proposition}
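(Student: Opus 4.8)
The plan is to compute the left-translated curve $\sigma(t):=\gamma(t_0)^{-1}\gamma(t)$ directly from the product form (\ref{sol}) and to recognize it as a geodesic of the same shape issuing from the unit. Write $w_0:=\cos\phi_0\,a+\sin\phi_0\,b-\beta c$, so that $\gamma(t)=\exp(tw_0)\exp(t\beta c)$ and hence $\gamma(t_0)^{-1}=\exp(-t_0\beta c)\exp(-t_0w_0)$. First I would use that $t\mapsto\exp(tw_0)$ and $t\mapsto\exp(t\beta c)$ are one-parameter subgroups: the two adjacent $w_0$-exponentials collapse to $\exp((t-t_0)w_0)$, leaving
\[
\sigma(t)=\exp(-t_0\beta c)\,\exp((t-t_0)w_0)\,\exp(t\beta c).
\]
Since the $c$-exponentials commute, I would then split off the required right factor by writing $\exp(t\beta c)=\exp((t-t_0)\beta c)\exp(t_0\beta c)$, equivalently by multiplying $\sigma(t)$ on the right by $\exp(-(t-t_0)\beta c)$.

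The central step is to absorb the leftover elements by conjugation. After that rearrangement, $\sigma(t)\exp(-(t-t_0)\beta c)$ equals $\exp(-t_0\beta c)\,\exp((t-t_0)w_0)\,\exp(t_0\beta c)$, which is precisely $\I(\exp(-t_0\beta c))$ applied to $\exp((t-t_0)w_0)$. The intertwining identity (\ref{I}) converts this into an exponential of a conjugated generator, so that
\[
\sigma(t)=\exp\big((t-t_0)\,w_1\big)\,\exp((t-t_0)\beta c),\qquad w_1:=\Ad(\exp(-t_0\beta c))\,w_0 .
\]
This already has the form (\ref{sol}) with the same parameter $\beta$, so only the identification of $w_1$ remains.

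For the last step I would invoke (\ref{ad}) and (\ref{adm}) to write $\Ad(\exp(sc))=\exp(s\,\ad c)=\exp(s\,Ia)$, and recall from Remark \ref{Ad} that $\exp(s\,Ia)$ acts on $\Lin(a,b)$ as a planar rotation while fixing the $c$-direction. Applying this to $w_0$ leaves the summand $-\beta c$ untouched and rotates the unit vector of angle $\phi_0$ by an angle proportional to $\beta t_0$, which produces the trigonometric arguments appearing in (\ref{sol1}). The one place demanding care—and the main obstacle—is the bookkeeping of this conjugation: one must split $\exp(t\beta c)$ correctly and track which of the factors $\exp(\pm t_0\beta c)$ conjugates $\exp((t-t_0)w_0)$, since this simultaneously fixes the appearance of the clean right factor $\exp((t-t_0)\beta c)$ and the precise sign of the rotation angle. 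A reliable cross-check is that the velocity of $\sigma$ at $t=t_0$ must reduce to $u(t_0)$ as computed in the proof of Theorem \ref{main}. A slicker but less explicit alternative is to observe that left-invariance of $d$ already makes $\sigma$ a geodesic through the unit; by Theorem \ref{main} every such geodesic has the form (\ref{sol}) reparametrized in $t-t_0$, and by the uniqueness in Corollary \ref{glob} it is pinned down by its value and velocity at $t=t_0$ together with the value of $\beta$, forcing it into the form (\ref{sol1}).
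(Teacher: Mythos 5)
Your proposal follows essentially the same route as the paper's proof: the same factorization of $\gamma(t_0)^{-1}\gamma(t)$, the same collapsing of the adjacent $w_0$-exponentials and splitting of the commuting $c$-exponential, and the same use of (\ref{I}), (\ref{ad}), (\ref{adm}) to evaluate $\Ad(\exp(-t_0\beta c))w_0$ as a rotation of $\Lin(a,b)$ fixing the $c$-component. Your velocity cross-check at $t=t_0$ correctly pins the rotated angle to $\phi_0-\beta t_0$, which is exactly what the last line of the paper's computation produces (the displayed formula (\ref{sol1}) writes $\beta t_0+\phi_0$ instead, an apparent sign slip in the statement rather than in the argument).
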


\begin{proof}
On the basis of formulae (\ref{I}), (\ref{ad}), (\ref{adm}),
$$\gamma(t_0)^{-1}\gamma(t)=\exp(-t_0\beta c)\exp(-t_0(\cos\phi_0a + \sin\phi_0b -\beta c))\cdot$$
$$\exp(t(\cos\phi_0a + \sin \phi_0b -\beta c)) \exp(t\beta c)=$$
$$\exp(-t_0\beta c)\exp((t-t_0)(\cos\phi_0a + \sin \phi_0b -\beta c))\exp(t_0\beta c) \exp((t-t_0)\beta c)=$$
$$[\I(\exp(-t_0\beta c))(\exp((t-t_0)(\cos\phi_0a + \sin \phi_0b -\beta c)))]\cdot\exp((t-t_0)\beta c)=$$
$$\exp[\Ad(\exp(-t_0\beta c)((t-t_0)(\cos\phi_0a + \sin \phi_0b - \beta c))]\cdot\exp((t-t_0)\beta c)=$$
$$\exp[\exp(\ad(-t_0\beta c))((t-t_0)(\cos\phi_0a + \sin \phi_0b -\beta c))]\cdot\exp((t-t_0)\beta c)=$$
$$\exp[\exp(-t_0\beta (Ia))((t-t_0)(\cos\phi_0a + \sin \phi_0b - \beta c))]\cdot\exp((t-t_0)\beta c)=$$
$$\exp((t-t_0)(\cos(\phi_0-\beta t_0)a + \sin(\phi_0- \beta t_0)b - \beta c))\cdot\exp((t-t_0)\beta c).$$
\end{proof}

\begin{lemma}
\label{ee}
Let $x=(x_{ij})\in \frak{so}(2,1),$
\begin{equation}
\label{q}
q:= x_{21}^2 + x_{31}^2 - x_{32}^2, \quad \alpha:= \sqrt{|q|}.
\end{equation}
Then
\begin{equation}
\label{expo1}
\exp(x)=e+x + \frac{x^2}{2},\quad\mbox{if}\quad q = 0,
\end{equation}
\begin{equation}
\label{expo2}
\exp(x)=e+\frac{\sin \alpha}{\alpha}x+\frac{1-\cos \alpha}{\alpha^2}x^2,\quad\mbox{if}\quad q < 0,
\end{equation}
\begin{equation}
\label{expo3}
\exp(x)=e+\frac{\sh \alpha}{\alpha}x+\frac{\ch\alpha-1}{\alpha^2}x^2,\quad\mbox{if}\quad q> 0,
\end{equation}
where $e$ is unit matrix of third order.
\end{lemma}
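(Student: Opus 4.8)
The plan is to reduce the computation to the single algebraic identity $x^3 = qx$ and then resum the exponential series termwise. First I would fix a general element $x=\lambda a+\mu b+\nu c\in\frak{so}(2,1)$ and write it out using (\ref{abc1}),
$$x=\left(\begin{array}{ccc} 0 & \lambda & \mu \\ \lambda & 0 & -\nu \\ \mu & \nu & 0 \end{array}\right),$$
so that $x_{21}=\lambda$, $x_{31}=\mu$, $x_{32}=\nu$, and hence $q=\lambda^2+\mu^2-\nu^2$.

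The key step is the characteristic polynomial of $x$. One checks that $\trace(x)=0$, that $\det(x)=0$, and that the sum of the three principal $2\times2$ minors equals $\nu^2-\lambda^2-\mu^2=-q$; thus the characteristic polynomial is $t^3-qt$, and Cayley--Hamilton gives $x^3=qx$. (Alternatively one may compute $x^2$ explicitly and verify $x^3=qx$ by direct multiplication, but this is longer.) An easy induction then yields $x^{2k+1}=q^{k}x$ for $k\ge0$ and $x^{2k}=q^{k-1}x^2$ for $k\ge1$, so that substitution into $\exp(x)=e+\sum_{k\ge0}\frac{x^{2k+1}}{(2k+1)!}+\sum_{k\ge1}\frac{x^{2k}}{(2k)!}$ separates the scalar and matrix parts:
$$\exp(x)=e+\Big(\sum_{k\ge0}\frac{q^{k}}{(2k+1)!}\Big)x+\Big(\sum_{k\ge1}\frac{q^{k-1}}{(2k)!}\Big)x^2.$$

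It then remains only to evaluate the two scalar series. For $q=0$ both truncate and give (\ref{expo1}). For $q<0$ I would set $q=-\alpha^2$ and recognize the series as $\frac{\sin\alpha}{\alpha}$ and $\frac{1-\cos\alpha}{\alpha^2}$, giving (\ref{expo2}); for $q>0$ I would set $q=\alpha^2$ and obtain $\frac{\sh\alpha}{\alpha}$ and $\frac{\ch\alpha-1}{\alpha^2}$, giving (\ref{expo3}). The only genuinely computational point is the determination of the characteristic polynomial, i.e.\ the identity $x^3=qx$; once that is established the three formulas are just the standard power-series expansions of the circular and hyperbolic functions, so no real difficulty remains.
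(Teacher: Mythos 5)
Your proposal is correct and follows essentially the same route as the paper: both establish $x^3=qx$ via the characteristic polynomial and Cayley--Hamilton, then resum the exponential series into the scalar multiples of $e$, $x$, $x^2$. The only cosmetic difference is that you extract the characteristic polynomial from the trace, principal minors, and determinant rather than expanding $|x-\lambda e|$ directly, and you keep the partial sums in terms of $q$ before substituting $q=\pm\alpha^2$.
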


\begin{proof}
Note that characteristic polynomial of the matrix $x$ is equal to
$$P(\lambda)=|x-\lambda e|=\left|\begin{array}{ccc}
-\lambda & x_{21} & x_{31}\\
x_{21} & -\lambda  & - x_{32}  \\
x_{31} & x_{32} & -\lambda
\end{array}\right|=-\lambda^3+ \lambda q,
$$
where $q$ is defined by formula (\ref{q}).

By Hamilton -- Cayley theorem \cite{Gant}, the matrix $x$ is a root of the polynomial $P(\lambda),$ i.e.
$x^3=q x.$ It follows from here that (\ref{expo1}) and
$$x^{2n+1}=(-1)^{n}\alpha^{2n}x,\quad x^{2n}=(-1)^{n+1}\alpha^{2n-2}x^2,\quad\mbox{if}\quad q < 0,\quad n\geq 1,$$
$$x^{2n+1}=\alpha^{2n}x,\quad x^{2n}=\alpha^{2n-2}x^2,\quad\mbox{if}\quad q > 0,\quad n\geq 1.$$
Therefore for $q < 0,$
$$\exp(x)=e+\sum_{n=1}^{\infty}\frac{x^n}{n!}=e + \frac{x}{\alpha}\sum_{n=0}^{\infty}\frac{(-1)^n\alpha^{2n+1}}{(2n+1)!}
-\frac{x^2}{\alpha^2}\sum_{n=1}^{\infty}\frac{(-1)^n\alpha^{2n}}{(2n)!},$$
and (\ref{expo2}) is fulfilled. Analogously for $q > 0,$
$$\exp(x)=e+\sum_{n=1}^{\infty}\frac{x^n}{n!}=e + \frac{x}{\alpha}\sum_{n=0}^{\infty}\frac{\alpha^{2n+1}}{(2n+1)!}
+\frac{x^2}{\alpha^2}\sum_{n=1}^{\infty}\frac{\alpha^{2n}}{(2n)!}$$
and (\ref{expo3}) is true.
\end{proof}

\begin{theorem}
\label{matrexp}
Let
\begin{equation}\label{mn1}
m=t,\,\,n=\frac{t^2}{2},\quad\mbox{if} \quad \beta^2 = 1,
\end{equation}
\begin{equation}\label{mn2}
m=\frac{\sin(t\sqrt{\beta^2-1})}{\sqrt{\beta^2-1}},\,\,n=\frac{1-\cos(t\sqrt{\beta^2-1})}{\beta^2-1},\quad\mbox{if}
\quad \beta^2 > 1,
\end{equation}
\begin{equation}\label{mn3}
m=\frac{\sh{(t\sqrt{1-\beta^2})}}{\sqrt{1-\beta^2}},\,\,n=\frac{\ch(t\sqrt{1-\beta^2})-1}{1-\beta^2},\quad\mbox{if}
\quad \beta^2 < 1,
\end{equation}
Then the geodesic $\gamma=\gamma(t)$ of left-invariant sub-Riemannian metric $d$ on the Lie group $SO_0(2,1)$ (see \,theorem~\ref{main}) is equal to 
\begin{equation}
\label{geod}
\tiny{\left(\begin{array}{ccc}
1+n & m\cos(\beta t-\phi_0)+\beta n\sin(\beta t-\phi_0) & \beta n\cos(\beta t-\phi_0) -m\sin(\beta t-\phi_0)\\
m\cos\phi_0+\beta n\sin\phi_0 & n\cos(\beta t-\phi_0)\cos\phi_0+\beta m\sin\beta t+(1-\beta^2n)\cos{\beta t}&
-n\sin(\beta t-\phi_0)\cos\phi_0+\beta m\cos\beta t - (1-\beta^2n)\sin\beta t \\
m\sin\phi_0-\beta n\cos\phi_0 & n\cos(\beta t-\phi_0)\sin\phi_0-\beta m\cos\beta t+(1-\beta^2n)\sin\beta t &
-n\sin(\beta t-\phi_0)\sin\phi_0+\beta m\sin\beta t+(1-\beta^2n)\cos{\beta t}
\end{array}\right).}
\end{equation}
\end{theorem}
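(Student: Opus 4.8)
The plan is to evaluate the two exponential factors in (\ref{sol}) separately and then multiply. Write $w := \cos\phi_0 a + \sin\phi_0 b - \beta c$, so that $\gamma(t) = \exp(tw)\exp(t\beta c)$. Using the basis (\ref{abc1}) one reads off the matrix
$$w = \begin{pmatrix} 0 & \cos\phi_0 & \sin\phi_0 \\ \cos\phi_0 & 0 & \beta \\ \sin\phi_0 & -\beta & 0\end{pmatrix},$$
so that for $x = tw$ the entries entering (\ref{q}) are $x_{21} = t\cos\phi_0$, $x_{31} = t\sin\phi_0$, $x_{32} = -t\beta$, whence $q = t^2(1-\beta^2)$. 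Thus the three cases $\beta^2 = 1$, $\beta^2 > 1$, $\beta^2 < 1$ of the theorem correspond exactly to $q = 0$, $q < 0$, $q > 0$ in Lemma \ref{ee}, and $\alpha = |t|\sqrt{|1-\beta^2|}$.

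Next I would apply Lemma \ref{ee} to $x = tw$ and check that in every case it collapses to $\exp(tw) = e + mw + nw^2$ with $m,n$ exactly as defined in (\ref{mn1})--(\ref{mn3}). The only point requiring care is the appearance of $|t|$ inside $\alpha$: since the coefficient of $w = x/t$ carries an extra factor $t = \sgn(t)\,|t|$, oddness of $\sin$ and $\sh$ turns $\sgn(t)\sin(|t|\sqrt{\beta^2-1})$ into $\sin(t\sqrt{\beta^2-1})$ (and likewise for $\sh$), while the coefficient of $w^2 = x^2/t^2$ involves only $\cos\alpha$ and $\ch\alpha$, which are even in $t$; this is what produces the clean formulas (\ref{mn2}), (\ref{mn3}). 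A direct computation then gives $w^2$ explicitly, so that $A(t) := \exp(tw) = e + mw + nw^2$ is known entrywise.

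For the second factor, $\exp(t\beta c)$ is the rotation matrix obtained from (\ref{AA}) with $\phi_0$ replaced by $\beta t$. It remains to multiply $\gamma(t) = A(t)\exp(t\beta c)$ and simplify. Writing $\psi = \beta t$, the first column of $\gamma$ is just the first column of $A$, and every remaining entry is a combination $A_{i2}\cos\psi \pm A_{i3}\sin\psi$; collecting terms and applying the addition formulas $\cos(\psi - \phi_0) = \cos\psi\cos\phi_0 + \sin\psi\sin\phi_0$ and $\sin(\psi - \phi_0) = \sin\psi\cos\phi_0 - \cos\psi\sin\phi_0$ converts the products of $m,n$ with $\cos\phi_0,\sin\phi_0,\cos\psi,\sin\psi$ into the entries displayed in (\ref{geod}). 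I expect the genuine labor --- and the only place where a sign or a term can go astray --- to be this last trigonometric bookkeeping in the lower right $2\times 2$ block; everything else is a mechanical substitution.
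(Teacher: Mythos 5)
Your proposal is correct, and I verified that the deferred computations do come out as claimed: with $w=\cos\phi_0 a+\sin\phi_0 b-\beta c$ one gets $q=t^2(1-\beta^2)$, $\exp(tw)=e+mw+nw^2$ with $m,n$ as in (\ref{mn1})--(\ref{mn3}), and the product with the rotation $\exp(t\beta c)$ reproduces every entry of (\ref{geod}) via the addition formulas. Your observation about $\alpha=|t|\sqrt{|1-\beta^2|}$ and the parity of $\sin$, $\sh$ versus $\cos$, $\ch$ is a genuine subtlety that the paper does not even mention (it works with the same lemma but never flags the absolute value), so it is good that you addressed it explicitly.

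The route differs from the paper's in one organizational respect. The paper first sets $\phi_0=0$, applies Lemma \ref{ee} only to the simpler matrix $t(a-\beta c)$, and then recovers the general case by the conjugation $\gamma(t)=B\,\gamma(t)|_{\phi_0=0}\,B^{-1}$ with $B=\exp(\phi_0 c)$ (Remark \ref{Ad}), using that $B$ commutes with $\exp(t\beta c)$; the price is a product of three matrices at the end. You instead feed the full matrix $w$, with $\phi_0$ present, into Lemma \ref{ee} and multiply only two matrices. The paper's reduction buys a cleaner exponential (the matrix $e+mw+nw^2$ at $\phi_0=0$ has no $\sin\phi_0\cos\phi_0$ cross terms), at the cost of invoking the $\Ad$-symmetry and a three-fold product; your version keeps the symmetry argument out of the proof entirely and concentrates all the work in the final trigonometric collection, exactly where you predicted it would be. Both are complete and correct; neither has a gap.
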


\begin{proof}
Let $\phi_0=0$. Then (\ref{sol}) takes the form
$$\gamma(t)\mid_{\phi_0=0}=\exp(t(a-\beta c))\exp(t\beta c).$$
Using lemma~\ref{ee}, we get
$$\exp(t(a-\beta c))=\exp\left(t\left(\begin{array}{ccc}
0 & 1 & 0 \\
1 & 0 & \beta \\
0 & -\beta & 0
\end{array}\right)\right)=\left(\begin{array}{ccc}
1 & 0 & 0 \\
0 & 1 & 0 \\
0 & 0 & 1
\end{array}\right)+$$
$$m\left(\begin{array}{ccc}
0 & 1 & 0 \\
1 & 0 & \beta \\
0 & -\beta & 0
\end{array}\right)+n\left(\begin{array}{ccc}
0 & 1 & 0 \\
1 & 0 & \beta \\
0 & -\beta & 0
\end{array}\right)^2=
\left(\begin{array}{ccc}
1+ n & m & n\beta \\
m & 1+n(1-\beta^2) & m\beta \\
-n\beta & -m\beta & 1-n\beta^2
\end{array}\right).$$
By (\ref{AA}), matrices $B=\exp(\phi_0)$ and $\exp{(t\beta c)}$ commute. 
It follows from here, (\ref{sol}), and remark \ref{Ad} that
$$\gamma(t)= B\cdot\gamma(t)\mid_{\phi_0=0}\cdot B^{-1}=B\exp(t(a-\beta c))B^{-1}\exp(t\beta c)=$$
$$\left(\begin{array}{ccc}
1 & 0 & 0\\
0 & \cos \phi_0 & -\sin \phi_0 \\
0 &   \sin \phi_0  & \cos \phi_0
\end{array}\right)\left(\begin{array}{ccc}
1+ n & m & n\beta \\
m & 1+n(1-\beta^2) & m\beta \\
-n\beta & -m\beta & 1-n\beta^2
\end{array}\right)\times $$
$$\left(\begin{array}{ccc}
1 & 0 & 0\\
0 & \cos (\beta t-\phi_0) & -\sin (\beta t-\phi_0) \\
0 &   \sin (\beta t -\phi_0)  & \cos (\beta t-\phi_0)
\end{array}\right)=$$
$$\left(\begin{array}{ccc}
1+n & m & \beta n\\
m\cos\phi_0+\beta n\sin\phi_0 & (1+(1-\beta^2)n)\cos\phi_0+\beta m\sin\phi_0 & \beta m\cos\phi_0-(1-\beta^2n)\sin\phi_0 \\
m\sin\phi_0- \beta n\cos\phi_0 & (1+(1-\beta^2)n)\sin\phi_0-\beta m\cos\phi_0  & \beta m\sin\phi_0+(1-\beta^2n)\cos\phi_0
\end{array}\right)$$
$$\left(\begin{array}{ccc}
1 & 0 & 0\\
0 & \cos (\beta t-\phi_0) & -\sin (\beta t-\phi_0) \\
0 &   \sin (\beta t -\phi_0)  & \cos (\beta t-\phi_0)
\end{array}\right).$$

Calculation of the product of last two matrices finishes the proof of theorem~\ref{matrexp}.
\end{proof}

\begin{corollary}
\label{zero}
If $\phi_0=0$ then in notation (\ref{mn1}),(\ref{mn2}), and (\ref{mn3}),
\begin{equation}
\label{geodo}
\gamma(t)=
\left(\begin{array}{ccc}
1+n & m\cos{\beta t}+\beta n\sin{\beta t} & \beta n\cos{\beta t} -m\sin{\beta t}\\
m & \beta m\sin{\beta t}+(1+(1-\beta^2)n)\cos{\beta t}&
\beta m\cos{\beta t} - (1+(1-\beta^2)n)\sin{\beta t} \\
-\beta n & -\beta m\cos{\beta t}+(1-\beta^2n)\sin{\beta t} &
\beta m\sin{\beta t}+(1-\beta^2n)\cos{\beta t}
\end{array}\right).
\end{equation}
\end{corollary}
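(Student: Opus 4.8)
The plan is to obtain (\ref{geodo}) simply as the specialization to $\phi_0=0$ of the general matrix (\ref{geod}) already furnished by Theorem~\ref{matrexp}. Setting $\cos 0=1$ and $\sin 0=0$, and hence $\cos(\beta t-\phi_0)=\cos\beta t$, $\sin(\beta t-\phi_0)=\sin\beta t$, I would substitute these values into all nine entries of (\ref{geod}) and collect terms.

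Most entries require no algebra at all. The three top-row entries lose only the angular shift $\phi_0$, producing the top row of (\ref{geodo}) verbatim; and the two lower entries of the first column, $m\cos\phi_0+\beta n\sin\phi_0$ and $m\sin\phi_0-\beta n\cos\phi_0$, collapse at once to $m$ and $-\beta n$ since $\sin\phi_0=0$, $\cos\phi_0=1$. The only entries needing a genuine, if elementary, simplification are the diagonal pair $(2,2)$, $(3,3)$ and the entry $(2,3)$: there the summand $n\cos(\beta t-\phi_0)\cos\phi_0$ reduces to $n\cos\beta t$ and must be merged with the pre-existing $(1-\beta^2 n)\cos\beta t$ to yield the coefficient $1+(1-\beta^2)n$ displayed in (\ref{geodo}), with the analogous pooling of the $\sin\beta t$ terms settling $(2,3)$. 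This collection of like trigonometric terms is the single non-mechanical step, and it is purely bookkeeping.

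I therefore anticipate no real obstacle: the statement is a direct specialization of Theorem~\ref{matrexp}, involving no new idea beyond substitution, so the only thing that can go wrong is a clerical slip in one of the nine entries. As an independent check one may instead recompute $\gamma(t)\mid_{\phi_0=0}=\exp(t(a-\beta c))\exp(t\beta c)$ directly, multiplying the explicit first factor already obtained in the proof of Theorem~\ref{matrexp} by the matrix form of $\exp(t\beta c)$ read off from (\ref{AA}) with $\phi_0$ replaced by $\beta t$; this product reproduces (\ref{geodo}) row by row and confirms the substitution.
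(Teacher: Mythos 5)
Your proposal is correct and coincides with the paper's (implicit) argument: the corollary is stated with no separate proof precisely because it is the direct specialization $\phi_0=0$ of the matrix (\ref{geod}) in Theorem~\ref{matrexp}, and your substitution reproduces all nine entries (the only merging of like terms occurs in the $(2,2)$ and $(2,3)$ entries, since in the third row the extra summands carry a factor $\sin\phi_0$ and vanish outright).
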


\section{Conformal Poincare model of Lobachevskii plane}
\label{conf}

Let us recall a known geometric interpretation of the Lie group $Sl(2)/\{\pm E_2\}$ as the group of all preserving orientation isometries of Lobachevskii plane $L^2.$ Upper half-plane $P=\{z=x+yi: y>0\}$ of complex plane $\mathbb{C}$ with metric tensor
$ds^2=\frac{dx^2+dy^2}{y^2}$ is \textit{conformal Poincare model of Lobachevskii plane} $L^2.$ The group $Sl(2)$ acts by preserving orientation isometries on the half-plane $P$ by means of real linear-fractional transformations 
$$z\rightarrow \frac{az+b}{cz+d}; \quad
\left(\begin{array}{cc}
a & b\\
c & d
\end{array}\right)\in Sl(2).$$
It is clear that the kernel of this action is the central subgroup 
$\{\pm E_2\}\subset Sl(2).$ Consequently the Lie group $Sl(2)/\{\pm E_2\}$ is really the group of (all) preserving orientation isometries of the Lobachevskii plane $L^2.$

One can easily check that Lie subgroup $SO(2)/\{\pm E_2\}\subset Sl(2)/\{\pm E_2\}$ is the stabilizer of point $z_0=i$ relative to indicated action, i.e. consists exactly of those elements of the group $Sl(2)/\{\pm E_2\},$ which fix this point; moreover the group $SO(2)/\{\pm E_2\}$ acts (simply) transitively by rotations on the circle, the set of unit tangent vectors to the half-plane $P$ at the point $i$. Therefore $P$ is naturally identified with the quotient-space 
$(Sl(2)/\{\pm E_2\})/(SO(2)/\{\pm E_2\}).$

\section{The group $SO_0(n,1)$ is a connected isometry group of the space $L^n$}
\label{lor}

In section \ref{prel} the group $SO_0(n,1)$ acted by pseudoisometries from the right on vector-rows of Minkowski space-time $\Mink^{n+1}$. In consequence of formula (\ref{T}) we can, and shall suppose that the group $SO_0(n,1)$ acts from the left on vector-columns of Minkowski space-time $\Mink^{n+1}$.

Orbit $SO_0(n,1)\cdot (w_0=(1,0,...,0)^T)$ of event $(1,0,...,0)^T\in \Mink^{n+1}$ is the upper (more exactly, for "usual" disposition of coordinate axes, "right") sheet of two-sheeted hyperboloid 
\begin{equation}
\label{hyper}
-t^2+\sum_{k=1}^nx_k^2=\{(t,x)^T,(t,x)^T\}=-1,\quad t>0.
\end{equation}
Restriction of pseudoscalar product $\{\cdot,\cdot\}$ to the tangent vector bundle of this orbit is scalar product and the orbit with this scalar product is isometric to $n$-dimensional Lobachevskii space $L^n$ of constant sectional curvature $-1$. Therefore later $L^n$ will denote this orbit. Then $SO_0(n,1)$ is automatically the largest connected transitive isometry group of the space $L^n.$ Moreover, subgroup 
$$SO(n):=\left(\begin{array}{cc}
1 & 0\\
0 & SO(n)
\end{array}\right)\subset SO_0(n,1)$$
is stabilizer of the group $SO_0(n,1)$ at the point $w_0.$ Therefore $L^n$ is naturally identified with homogeneous space $SO_0(n,1)/SO(n)$ while the action 
of the group $SO_0(n,1)$ on  $L^n$ is identified with its standard left action on а
$SO_0(n,1)/SO(n).$ Canonical projection 
$p: SO_0(n,1)\rightarrow L^n=SO_0(n,1)/SO(n)$ is defined by formula 
$p(g)=gSO(n).$

\begin{remark}
\label{sect} Unlike conformal model of Lobachevskii plane from section \ref{conf} geodesics, equidistant curves, circles, and horocycles of Lobachevskii plane have 
simple visual description in "relativistic" model from section \ref{lor}. Namely, their collection for $n=2$ is the set of all sections of the sheet (\ref{hyper}) of two-sheeted hyperboloid by planes. Moreover all tangent vectors of any such plane
are pseudoorthogonal to some non-zero vector $v$: to space-like ($\{v,v\}> 0$) for geodesics and equidistant curves (and in the case of geodesic, corresponding plane passes through origin of coordinates $O$), time-like ($\{v,v\}< 0$) for circles, and isotropic $\{v,v\}=0$) for horocycles. Analogous statements are true for $n > 2.$
\end{remark}

The Lorentz group $SO_0(n,1)$ is diffeomorphic to the space $L^n_1$ of all unit tangent vectors to $L^n.$ Namely, any element $g\in SO_0(n,1)$ corresponds to unit tangent vector $f(g):=g(v_0)$ to $L^n$ at point $g(w_0),$ where $v_0=(0,1,0,\dots,0)^T$ is unit tangent vector to $L^n$ at the point $w_0.$

Next statements of this section are based on information given in the introduction.

For any geodesic path
$\gamma(t)$, $0\leq t \leq t_1,$ in $(SO_0(2,1),d)$ with arbitrary origin 
$g\in SO_0(2,1),$ $f(\gamma(t))$, $0\leq t \leq t_1,$ is a parallel vector field (in \textit{Lobachevskii plane!}) along projection $p(\gamma(t))$, $0\leq t \leq t_1,$ in the sense of  \cite{Pog} with initial unit tangent vector $f(\gamma(0))=g(v_0)\in L^2_1$ \cite{BerZ}.

1) In particular, if $\gamma(t),$ $t\in \mathbb{R},$ is geodesic in $(SO_0(2,1),d)$ of the form (\ref{sol}) with $\phi_0=0$, then $\gamma'(0)=v_0$ and $f(\gamma(t))$ $0\leq t \leq t_1,$ is parallel vector field in $L^2$ along $p(\gamma(t))$,
$t\in \mathbb{R},$ with initial unit tangent vector $\gamma'(0).$

2) Canonical projection $p: (SO_0(2,1),d)\rightarrow L^2$ is submetry \cite{BG}, \cite{BerZ}.

3) If $\gamma(t),$ $0\leq t \leq t_1,$ is any (parametrized by arclength) geodesic in $(SO_0(2,1),d)$ then its projection $p(\gamma(t)),$ $0\leq t \leq t_1,$ in
$L^2$ is parametrized by arclength.

4) On the ground of proposition \ref{maint}, remark \ref{Ad}, and left invariance of the metric $d$, for the search of all shortest arcs in $(SO_0(2,1),d)$ it is enough to find all noncontinuable shortest arcs of the form $\gamma(t)$, 
$0\leq t \leq t_1,$ (\ref{sol}) with $\phi_0=0.$

\section{Shortest arcs in the Lie group $(SO_0(2,1),d)$}
\label{vuch}

Let us use statements 1) --- 4) from section \ref{lor} to find shortest arcs in
$(SO_0(2,1),d)$. In particular, it is sufficient to investigate segments of geodesics of the form 
\begin{equation}
\label{ge}
\gamma(t)=\exp(t(a-\beta c))\exp(t\beta c),\quad 0\leq t\leq t_1,
\end{equation}
and their projections
\begin{equation}
\label{pr}
x(t):= p(\gamma(t))=\gamma(t)\cdot w_0=\gamma(t)\cdot (1,0,0)^T=(1+n,m,-\beta n)^T,\quad 0\leq t\leq t_1,
\end{equation}
to the plane $L^2,$ where $m$, $n$ are defined by formulae (\ref{mn1}),(\ref{mn2}),(\ref{mn3}) (we used formula (\ref{geodo})).

Let us formulate the Gauss-Bonnet theorem \cite{Pog}. Let $M$ be
two-dimensional oriented manifold with Riemannian metric $ds^2,$ $\Phi$ is a region in $M,$ homeomorphic to disc and bounded by closed piece-wise regular curve 
$\gamma$ with regular links $\gamma_1,\dots, \gamma_n,$ forming angles 
$\alpha_1, \dots, \alpha_n$ from the side of region $\Phi.$ Direction on the curve
$\gamma$ is given so that the region $\Phi$ is situated from the right under bypass of the curve in this direction. Then

\begin{theorem}
\label{gauss}
$$\sum_{k=1}^n \int_{\gamma_k}\kappa ds + \sum_{k=1}^n (\pi-\alpha_k) = 2\pi -\int\int_{\Phi}Kd\sigma,$$
where $\kappa$ is geodesic curvature at points of links of the curve, $K$ is
Gaussian (sectional) curvature of the surface $(M,ds^2),$ and integration in the
right part of equality is taken by area element of the region $\Phi.$
\end{theorem}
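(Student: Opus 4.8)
The plan is to recognize this as the classical Gauss--Bonnet theorem and to reduce it to two standard ingredients: the second structure equation of the Levi--Civita connection, which converts the ambient curvature into an exact form, and the theorem of turning tangents (Hopf's Umlaufsatz), which supplies the topological constant $2\pi$. Since $\Phi$ is homeomorphic to a disc it is contractible, so the tangent bundle is trivial over a neighbourhood of $\overline{\Phi}$, and I may fix there a smooth, positively oriented orthonormal frame $(e_1,e_2)$. Let $\omega^1,\omega^2$ be the dual coframe and $\omega_{12}$ the associated connection $1$-form; the first structure equations determine $\omega_{12}$ uniquely, and the second structure equation is the curvature identity
\begin{equation*}
d\omega_{12}=-K\,\omega^1\wedge\omega^2=-K\,d\sigma .
\end{equation*}

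First I would treat each regular link $\gamma_k$ separately. Writing its unit tangent as $T=\cos\theta_k\,e_1+\sin\theta_k\,e_2$ for a continuous angle function $\theta_k$ along $\gamma_k$, a direct computation with the structure equations yields the well-known formula $\kappa=\dot\theta_k+\omega_{12}(T)$ for the geodesic curvature, so that
\begin{equation*}
\int_{\gamma_k}\kappa\,ds=\Delta_k\theta+\int_{\gamma_k}\omega_{12},
\end{equation*}
where $\Delta_k\theta$ denotes the total increment of $\theta_k$ along $\gamma_k$. Summing over $k$ and applying Stokes' theorem to the closed boundary curve $\gamma=\partial\Phi$, with the orientation prescribed in the statement (region on the right), I obtain
\begin{equation*}
\sum_{k=1}^n\int_{\gamma_k}\kappa\,ds=\sum_{k=1}^n\Delta_k\theta+\oint_{\partial\Phi}\omega_{12}=\sum_{k=1}^n\Delta_k\theta-\int\int_{\Phi}K\,d\sigma .
\end{equation*}

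The remaining task is to evaluate $\sum_{k}\Delta_k\theta$. Here I would invoke the theorem of turning tangents: for a simple closed piecewise regular curve bounding a disc, the total rotation of the tangent direction, counting the exterior (jump) angles at the vertices, equals $2\pi$. Since the exterior angle at the $k$-th vertex is exactly $\pi-\alpha_k$, this reads $\sum_{k=1}^n\Delta_k\theta+\sum_{k=1}^n(\pi-\alpha_k)=2\pi$, whence $\sum_{k}\Delta_k\theta=2\pi-\sum_{k}(\pi-\alpha_k)$. Substituting into the previous display and rearranging yields precisely the asserted identity.

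I expect the main obstacle to be the theorem of turning tangents, which is the only genuinely global, topological step: unlike the rest of the argument it cannot be obtained by a local frame computation, and a careful proof (e.g.\ Hopf's homotopy of the secant map, or a rotation-index/degree argument) is required, with extra care at the corners where the angle function is merely piecewise continuous. The orientation convention of the statement (region on the right rather than on the left) must also be tracked consistently through Stokes' theorem and the turning-tangents normalization, since it fixes the overall sign of the $2\pi$ and of the curvature integral.
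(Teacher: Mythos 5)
The paper does not prove this statement at all: Theorem \ref{gauss} is the classical Gauss--Bonnet theorem, quoted from Pogorelov's textbook \cite{Pog} purely as a tool to be applied later (in Propositions \ref{area} and \ref{zer1}), so there is no proof in the paper to compare against. Your moving-frames argument --- the structure equation $d\omega_{12}=-K\,d\sigma$ over the contractible region, the identity $\kappa=\dot\theta_k+\omega_{12}(T)$ for the signed geodesic curvature on each regular link, Stokes' theorem, and the Umlaufsatz to supply the constant $2\pi$ --- is the standard proof of this result and is correct in outline, with the theorem of turning tangents correctly identified as the one genuinely global ingredient that must be imported or proved separately. The one point you flag but should actually carry out is the orientation bookkeeping: with the paper's convention that $\Phi$ lies to the \emph{right} of $\gamma$, both the boundary integral $\oint_{\gamma}\omega_{12}$ produced by Stokes' theorem and the turning-tangents constant change sign relative to the usual counterclockwise normalization, and these two sign changes are absorbed only because $\kappa$ is taken positive when the curve bends toward $\Phi$ --- the convention the paper uses later in Proposition \ref{kappa}, where $\kappa=-v''(0)$ in the semigeodesic coordinates $(u,v)$. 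Until that sign convention for $\kappa$ is fixed explicitly, the identity as displayed (with $+\sum\int_{\gamma_k}\kappa\,ds$ on the left) is not forced by your computation; with it, your argument yields exactly the stated formula.
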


Formula for calculation of $\kappa$ in semigeodesic system of coordinates is given in \cite{Pog}. Below we construct a semigeodesic system of coordinates 
$(u,v)$ in $L^2.$

Canonical parametrized by arclength geodesic in $L^2$ has a form 
$\tilde{\gamma}(s)=(\ch s,\sh s,0).$ It follows from here and the invariance of distance in $L^2$ with respect to action of the group $SO_0(2,1)$ that distance between arbitrary points $(t,x,y)^T$ and $(t_1,x_1,y_1)^T$ in $L^2$ is equal to
\begin{equation}
\label{dist}
\rho((t,x,y)^T,(t_1,x_1,y_1)^T) = \Arcch (-\{(t,x,y)^T,(t_1,x_1,y_1)^T\}).
\end{equation}

Let $(t,x,y)^T$ be arbitrary point in $L^2.$ Then $(1/\sqrt{t^2-y^2})(t,0,y)^T\in L^2$ and by (\ref{dist}), distances from this point to points $(t,x,y)^T$ and $w_0=(1,0,0)^T$ are equal respectively to $\Arcch(\sqrt{t^2-y^2})$ and 
$\Arcch(t/\sqrt{t^2-y^2}).$ In accordance with this, define coordinates $u,v$
of the point $(t,x,y)^T$ by formulae
\begin{equation}
\label{uv}
u = (\sgn x)\Arcch(\sqrt{t^2-y^2}), \quad v = (\sgn y)\Arcch\left(\frac{t}{\sqrt{t^2-y^2}}\right).
\end{equation}
Taking into account what we said above, it is not difficult to check that all points of line $u=u_0$ are disposed from line $u=0$ on distance $|u_0|,$ moreover line $v=v_0$ gives shortest junction of point $(u_0,v_0)$ with the line $u=0,$
and the line $u=0$ is geodesic, for which $v$ is parametrization by arclength. It follows from here that the length element in these coordinates is defined by formula
\begin{equation}
\label{ds2}
ds^2=du^2+\ch^2(u)dv^2,
\end{equation}
i.e. $(u,v)$ is semigeodesic system of coordinates in $L^2.$ Furthermore  \textit{first partial derivatives of components of metric tensor and Christoffel symbols in this system of coordinates are equal to zero on the line $u=0.$}
One can easily deduce from here, proposition \ref{maint}, formula (\ref{dx}),
and formula for $\kappa$ in \cite{Pog} the following proposition.

\begin{proposition}
\label{kappa}
Projection (\ref{pr}) has constant geodesic curvature $\kappa$ and 
$$u(0)=0,\quad v(0)=0,\quad u'(0)=1,\quad v'(0)=0,\quad \kappa = -v''(0).$$
for its coordinate presentation $(u(t),v(t))=(u(x(t)),v(x(t)))$.
\end{proposition}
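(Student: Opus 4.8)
The plan is to prove the three assertions in turn: constancy of $\kappa$, the initial data, and the identity $\kappa=-v''(0)$. For the \emph{constancy} I would exploit homogeneity. Fix $t_0$ and apply Proposition \ref{maint} with $\phi_0=0$: the left translate $\gamma(t_0)^{-1}\gamma(t)$ is again a geodesic of the form (\ref{sol}), with the same $\beta$ and angle $\beta t_0$, passing through $e$ at $t=t_0$. Because $p$ intertwines left multiplication on $SO_0(2,1)$ with the isometric action on $L^2=SO_0(2,1)/SO(2)$, the element $\gamma(t_0)^{-1}$ acts as an isometry of $L^2$ carrying $x(t_0)$ to $w_0$; hence the geodesic curvature of $x$ at $x(t_0)$ equals that of $p(\gamma(t_0)^{-1}\gamma(\cdot))$ at its base point. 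By Remark \ref{Ad} the automorphism $\Ad(\exp(-\beta t_0 c))$ fixes $w_0$, preserves the metric, and rotates $\cos(\beta t_0)a+\sin(\beta t_0)b-\beta c$ back to $a-\beta c$, so it carries this translated curve isometrically onto the standard curve (\ref{ge}) reparametrized by $t-t_0$. Since geodesic curvature is an isometric invariant, $\kappa(t_0)=\kappa(0)$ for every $t_0$.

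Next I would compute the initial data directly from (\ref{pr}) and (\ref{uv}). Across all three cases the functions $m,n$ admit the common expansions $m=t+O(t^3)$ and $n=\tfrac{t^2}{2}+O(t^4)$, so $m(0)=n(0)=0$, $m'(0)=1$, $n'(0)=0$, $n''(0)=1$, and the projected point $x(t)=(1+n,\,m,\,-\beta n)^T$ starts at $w_0$. Writing $T=1+n$, $X=m$, $Y=-\beta n$, one gets $T^2-Y^2=1+2n+(1-\beta^2)n^2=1+t^2+O(t^4)$, whence $\Arcch\sqrt{T^2-Y^2}=|t|+O(t^3)$; since $\sgn X=\sgn t$ near $0$, formula (\ref{uv}) gives $u(t)=t+O(t^3)$, so $u(0)=0$ and $u'(0)=1$. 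Likewise $T/\sqrt{T^2-Y^2}=1+O(t^4)$, so $\Arcch(T/\sqrt{T^2-Y^2})=O(t^2)$; tracking the leading constant gives $\Arcch(\cdots)=\tfrac{|\beta|}{2}t^2+O(t^4)$, and since $\sgn Y=-\sgn\beta$ near $0$ the sign factor yields $v(t)=-\tfrac{\beta}{2}t^2+O(t^4)$. Hence $v(0)=0$, $v'(0)=0$, and $v''(0)=-\beta$.

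Finally I would obtain the curvature identity. Since $(u,v)$ is semigeodesic with $ds^2=du^2+\ch^2(u)\,dv^2$ and all Christoffel symbols vanish on the line $u=0$ (as recorded just above the statement), the geodesic-curvature formula of \cite{Pog} evaluated at $t=0$, which lies on $u=0$, reduces to the elementary planar expression $\pm(u'v''-v'u'')$. With $u'(0)=1$ and $v'(0)=0$ the $u''$ term drops out and this equals $\pm v''(0)$; the orientation convention of \cite{Pog} fixes the sign as $\kappa=-v''(0)$ (which then equals $\beta$). Combined with the first step this determines $\kappa$ along the whole curve.

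The step I expect to be the main obstacle is the constancy argument: one must combine Proposition \ref{maint} and Remark \ref{Ad} carefully so that an arbitrary point $x(t_0)$ is genuinely carried to the base point $w_0$ by an isometry of $L^2$ that also fixes $w_0$ and normalizes the curve to the standard form, after which invariance of geodesic curvature applies. The Taylor computations are routine but demand care with the two sign factors $\sgn X=\sgn t$ and $\sgn Y=-\sgn\beta$ near $t=0$, and the only delicate point in the last step is matching the orientation convention of the $\kappa$-formula in \cite{Pog}.
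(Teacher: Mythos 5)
Your proposal is correct and follows essentially the route the paper intends: the paper gives no detailed proof, merely stating that the proposition is "easily deduced" from Proposition \ref{maint}, the explicit formulas (\ref{pr}), (\ref{uv}), the vanishing of the Christoffel symbols on $u=0$, and the $\kappa$-formula in \cite{Pog} --- exactly the three ingredients you assemble (homogeneity via Proposition \ref{maint} and Remark \ref{Ad} for constancy, Taylor expansion of $m,n$ for the initial data, and the degenerate form of the curvature formula on $u=0$ for $\kappa=-v''(0)$). Your Taylor computation even yields $v''(0)=-\beta$, which is the content of the subsequent Proposition \ref{kap}, and your sign bookkeeping ($\sgn X=\sgn t$, $\sgn Y=-\sgn\beta$) agrees with the paper's computation there.
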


\begin{remark}
N.I.Lobachevskii already constructed by other method a semigeodesic system of coordinates $(u,v)$ in $L^2$ with length element (\ref{ds2}) and wrote explicitly the formula (\ref{ds2}). He found an analogous system of coordinates and formula of length element for $L^3.$  This contradicts the conventional belief that Lobachevskii himself didn't present a model of his geometry (and therefore gave no
complete logical justification of this geometry).
\end{remark}

\begin{proposition}
\label{kap}
Geodesic curvature of the projection (\ref{pr}) is equal to $\kappa=\beta.$
\end{proposition}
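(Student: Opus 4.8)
The plan is to reduce everything to a single second-order Taylor expansion. By Proposition~\ref{kappa} the geodesic curvature of the projection (\ref{pr}) is constant and equals $-v''(0)$, where $(u(t),v(t))$ is the coordinate presentation of the projection in the semigeodesic coordinates (\ref{uv}), and moreover $v(0)=v'(0)=0$. Hence it suffices to compute $v''(0)$ and to check that it equals $-\beta$.

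First I would rewrite the $v$-coordinate in a form convenient for expansion. Denoting the Minkowski coordinates of the projection (\ref{pr}) by $T=1+n$, $X=m$, $Y=-\beta n$, the definition (\ref{uv}) gives $\ch v = T/\sqrt{T^2-Y^2}$ up to the sign factor, whence $\sh v = Y/\sqrt{T^2-Y^2}$ with the sign $\sgn Y$ automatically incorporated. A short computation yields $T^2-Y^2=(1+n)^2-\beta^2 n^2 = 1+2n+(1-\beta^2)n^2$, so that
$$\sh(v(t)) = \frac{-\beta n}{\sqrt{1+2n+(1-\beta^2)n^2}}.$$
Since the right-hand side is an analytic even function of $t$ vanishing at $t=0$, and $\Arcsh$ is analytic with unit derivative at $0$, the function $v(t)$ is itself analytic and even; this already re-proves $v'(0)=0$ with no appeal to sign conventions.

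Next I would extract the needed coefficient. From Lemma~\ref{ee} --- or directly from the explicit formulae (\ref{mn1}), (\ref{mn2}), (\ref{mn3}) --- one reads off $n(0)=n'(0)=0$ and $n''(0)=1$ in each of the three cases $\beta^2<1$, $\beta^2=1$, $\beta^2>1$, so that $n = t^2/2 + O(t^4)$. Substituting, the numerator becomes $-\beta t^2/2 + O(t^4)$ while the denominator is $1+O(t^2)$, giving $\sh(v(t)) = -\beta t^2/2 + O(t^4)$ and therefore $v(t) = -\beta t^2/2 + O(t^4)$. Reading off $v''(0)=-\beta$, I conclude $\kappa = -v''(0)=\beta$.

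The only genuinely delicate point is the sign bookkeeping in (\ref{uv}): one must be certain that the $\sgn$ factors there are compatible with the orientation convention under which Proposition~\ref{kappa} yields $\kappa=-v''(0)$. I would sidestep this entirely by working with $\sh v$ rather than $\ch v$, since the relation $\sh v = Y/\sqrt{T^2-Y^2}$ pins down $v$ as a single analytic function of $t$ with no branch ambiguity; then the expansion above is unconditional and no case distinction in the $\sgn$ is required. The residual computation is routine, and it is uniform across the three regimes of $\beta$ precisely because only the value $n''(0)=1$ enters at second order.
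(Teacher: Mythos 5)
Your proof is correct and follows essentially the same route as the paper: both reduce the claim to $\kappa=-v''(0)$ via Proposition~\ref{kappa}, express $v$ through the Minkowski coordinates $(1+n,m,-\beta n)$ of the projection using $T^2-Y^2=1+2n+(1-\beta^2)n^2$, and conclude from $n(0)=n'(0)=0$, $n''(0)=1$. The only difference is that you work with $\sh v=Y/\sqrt{T^2-Y^2}$ while the paper differentiates $v=\sgn(-\beta)\Arcch\bigl((1+n)/\sqrt{1+2n+(1-\beta^2)n^2}\bigr)$ directly; your variant handles the sign factor slightly more cleanly but is not a different argument.
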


\begin{proof}
In consequence of formulae (\ref{pr}), (\ref{mn1}), (\ref{mn2}), (\ref{mn3}), and (\ref{uv})
$$n(0)=m(0)=0,\quad n'=m,\quad n'(0)=0,\quad n''(0)=1$$
and for small positive $t$
$$v(t)=\sgn(-\beta)\Arcch\left(\frac{1+n}{\sqrt{1+2n+(1-\beta^2)n^2}}\right),$$
$$v'(t)= \frac{\sgn(-\beta)}{\sh(\Arcch((1+n)/\sqrt{1+2n+(1-\beta^2)n^2}))}\cdot
\left(\frac{1+n}{\sqrt{1+2n+(1-\beta^2)n^2}}\right)'=$$
$$\frac{\sqrt{1+2n+(1-\beta^2)n^2}}{|\beta|n}\cdot \frac{\sgn(-\beta)\beta^2nn'}{(1+2n+(1-\beta^2)n^2)^{3/2}}=
\frac{\sgn(-\beta)|\beta|n'}{1+2n+(1-\beta^2)n^2}.$$
On the ground of proposition \ref{kappa}
$$\kappa = -v''(0)=\sgn(\beta)|\beta|n''(0)=\beta.$$
\end{proof}

According to theorem \ref{matrexp} we shall consider later 4 cases:\\
 I) $\beta=0$, II) $0< \beta^2 < 1$, III) $\beta^2 = 1$, IV) $1< \beta^2.$

The next corollary follows immediately from proposition \ref{kap} and known facts of hyperbolic geometry.

\begin{corollary}
\label{geoc}
Projection (\ref{pr}) is I) geodesic, II) equidistant curve, \\III) horocycle, IV) circle.
\end{corollary}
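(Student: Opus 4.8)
The plan is to combine the value of the geodesic curvature from Proposition \ref{kap} with the classification of plane sections of the hyperboloid recorded in Remark \ref{sect}. By Proposition \ref{kap} the projection $x(t)$ in (\ref{pr}) has constant geodesic curvature $\kappa=\beta$, and the standard classification of curves of constant geodesic curvature in the hyperbolic plane of curvature $-1$ says that such a curve is a geodesic when $\kappa=0$, an equidistant curve when $0<|\kappa|<1$, a horocycle when $|\kappa|=1$, and a circle when $|\kappa|>1$. Reading this off against the four regimes $\beta=0$, $0<\beta^2<1$, $\beta^2=1$, $1<\beta^2$ gives the four assertions of the corollary; this is exactly the ``known facts of hyperbolic geometry'' referred to in the statement.

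To make the argument self-contained inside the relativistic model rather than invoking the classification as a black box, I would verify the causal type directly from (\ref{pr}). Since $x(t)=(1+n,m,-\beta n)^T$, for $\beta\neq 0$ one has $n=x_0-1=-x_2/\beta$, so the whole curve lies in the affine plane $\beta x_0+x_2=\beta$. This equation can be rewritten as $\{v,x\}=\beta$ with $v=(-\beta,0,1)^T$, so $v$ is by construction pseudo-orthogonal to every direction of the plane, and a one-line computation gives $\{v,v\}=1-\beta^2$. Moreover $x(t)=\gamma(t)w_0$ with $\gamma(t)\in SO_0(2,1)$, so the curve also lies on the hyperboloid (\ref{hyper}) and is therefore an arc of the conic cut out on $L^2$ by this plane; by Remark \ref{sect} its type is governed precisely by the sign of $\{v,v\}$ and by whether the plane contains the origin.

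The four cases then fall out immediately. For $\beta=0$ the plane is $x_2=0$, which passes through the origin $O$ and has space-like pseudo-orthogonal direction, so the section is a geodesic; here (\ref{mn3}) in fact gives $x(t)=(\ch t,\sh t,0)^T$, the canonical geodesic through $w_0$. For $0<\beta^2<1$ the vector $v$ is still space-like but the plane misses $O$, yielding an equidistant curve; for $\beta^2=1$ the vector $v$ is isotropic, yielding a horocycle; and for $\beta^2>1$ the vector $v$ is time-like, yielding a circle. There is no real obstacle here: the only points needing attention are that the elimination producing the plane presupposes $\beta\neq 0$, so the geodesic case must be checked separately as above, and that $v$ must be the \emph{pseudo-orthogonal} direction rather than the Euclidean normal — a subtlety that the rewriting $\{v,x\}=\beta$ resolves automatically.
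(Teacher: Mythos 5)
Your first paragraph is exactly the paper's argument: the corollary is stated there as following ``immediately from Proposition \ref{kap} and known facts of hyperbolic geometry,'' i.e.\ from $\kappa=\beta$ together with the standard classification of constant-geodesic-curvature curves in $L^2$ by $\kappa=0$, $0<|\kappa|<1$, $|\kappa|=1$, $|\kappa|>1$. Your second part goes further than the paper does: you verify the classification directly in the hyperboloid model by observing that $x(t)=(1+n,m,-\beta n)^T$ lies in the plane $\{v,x\}=\beta$ with $v=(-\beta,0,1)^T$ and $\{v,v\}=1-\beta^2$, and then invoke Remark \ref{sect}. The computations check out ($n=x_0-1=-x_2/\beta$ for $\beta\neq 0$, the plane passes through $O$ exactly when $\beta=0$, and the causal type of $v$ matches the four regimes), and you correctly flag the two delicate points: the elimination requires $\beta\neq 0$, so case I must be handled separately (which you do, via $x(t)=(\ch t,\sh t,0)^T$), and the relevant normal is the pseudo-orthogonal one. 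What this buys is independence from the ``known facts'' black box and from Proposition \ref{kap} itself: the plane-section argument identifies the curve type without ever computing the geodesic curvature, whereas the paper's route needs Proposition \ref{kap} (and hence the semigeodesic coordinates and Proposition \ref{kappa}) as input. Both are correct; the paper's is shorter given what has already been established, yours is more self-contained within the relativistic model.
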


\begin{lemma}
\label{zer}
In the case I) every segment $\gamma(t)$, $0\leq t \leq t_1,$ is a shortest arc.
\end{lemma}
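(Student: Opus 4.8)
The plan is to push everything down to the Lobachevskii plane via the submetry $p\colon (SO_0(2,1),d)\to L^2$ and combine two cheap inequalities: the curve $\gamma$ itself gives an upper bound for the distance between its endpoints, while the $1$-Lipschitz property of the submetry, applied to the known distance in $L^2$, gives a matching lower bound.

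First I would use statement 4) of section \ref{lor} to reduce to $\phi_0=0$, so that with $\beta=0$ the geodesic becomes the one-parameter subgroup $\gamma(t)=\exp(ta)$. Since $\gamma$ is parametrized by arclength (Theorem \ref{main}), the segment $\gamma(t)$, $0\leq t\leq t_1$, has length exactly $t_1$, and therefore $d(\gamma(0),\gamma(t_1))\leq t_1$. This is the easy half.

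For the reverse inequality I would compute the projection. By formula (\ref{pr}) together with (\ref{mn3}) specialized to $\beta=0$ (so $m=\sh t$, $n=\ch t-1$), the projection is $x(t)=p(\gamma(t))=(\ch t,\sh t,0)^T$, which is precisely the canonical geodesic $\tilde\gamma(t)$ of $L^2$ (consistent with Corollary \ref{geoc}, case I, and statement 3), that it is parametrized by arclength). Applying the explicit distance formula (\ref{dist}) with $\{(1,0,0)^T,(\ch t_1,\sh t_1,0)^T\}=-\ch t_1$ yields $\rho(x(0),x(t_1))=\Arcch(\ch t_1)=t_1$. Because $p$ is a submetry (statement 2)), it is distance non-increasing, so $t_1=\rho(p(\gamma(0)),p(\gamma(t_1)))\leq d(\gamma(0),\gamma(t_1))$. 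Combining the two bounds gives $d(\gamma(0),\gamma(t_1))=t_1=\operatorname{length}(\gamma|_{[0,t_1]})$, i.e. the segment realizes the distance between its endpoints and is a shortest arc.

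The only substantive ingredient is that the value $\rho$ produced by (\ref{dist}) really is the intrinsic distance in $L^2$, which in turn rests on geodesics of the Lobachevskii plane being globally minimizing; but since (\ref{dist}) is already established as the distance function, the $L^2$ computation is immediate and I anticipate no genuine obstacle. Everything else is the soft ``$1$-Lipschitz'' half of the submetry definition, and the case $\beta=0$ is exactly the degenerate one where the projected geodesic is itself a distance-minimizer, which is what makes the two inequalities close up so cleanly.
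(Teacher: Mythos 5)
Your proof is correct and follows essentially the same route as the paper: both rest on the observation that for $\beta=0$ the projection to $L^2$ is a minimizing geodesic of length $t_1$, so no competitor in $(SO_0(2,1),d)$ joining the same endpoints can be shorter. The paper phrases this as a contradiction via statement 3) (projections of geodesics are parametrized by arclength), whereas you invoke the $1$-Lipschitz half of the submetry property (statement 2)) directly together with the explicit distance formula (\ref{dist}); the two versions are interchangeable here.
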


\begin{proof}
On the basis of corollary \ref{geoc} every segment (\ref{pr}) is a shortest arc. Assume that the statement of lemma is false. Then there is another shortest
geodesic segment $\gamma_0(t)$, $0\leq t \leq t_0 < t_1,$ in
$(SO_0(2,1),d)$ with the same ends as for the segment in lemma. Hence in view of
3) the path $x_0(t):= p(\gamma_0(t))$, $0\leq t \leq t_0,$ has length $t_0 < t_1$ and the same ends as the shortest arc (\ref{pr}) with the length $t_1,$ a contradiction.
\end{proof}

\begin{proposition}
\label{area}
Suppose that projection (\ref{pr}) of geodesic segment (\ref{ge}), where
$\beta\neq 0,$ has no self-intersection (this is always true in cases II and III and it is true in the case IV when $0\leq t_1< 2\pi/\sqrt{\beta^2-1}$),
$S(t_1)=S(t_1,\beta)$ is area of curvilinear digon $P$ in $L^2$ bounded by the segment (\ref{pr}) and the shortest arc $[x(0)x(t_1)]$ with the length $r=r(t_1)$ in $L^2,$ $\psi=\psi(t_1,\beta)$ is the angle of the digon $P$. Then
\begin{equation}
\label{exist}
S(t_1)=|\beta|t_1-2\psi,\quad r= \Arcch((1+n)(t_1)), \quad r'(t_1)=\cos\psi = \frac{m}{\sqrt{n(n+2)}}.
\end{equation}
In addition $S'(t_1)>0,$ $t_1>0;$ $0< \psi < \pi/2$ in cases II and III, and in the case IV when $0\leq t_1< \pi/\sqrt{\beta^2-1}.$
\end{proposition}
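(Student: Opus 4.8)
The plan is to treat the four assertions separately: the area formula comes from the Gauss--Bonnet theorem (Theorem~\ref{gauss}), the distance $r$ from the explicit metric formula (\ref{dist}), the two expressions for $r'$ from differentiation combined with the first variation of arclength, and the monotonicity $S'>0$ together with the range of $\psi$ from the hyperboloid constraint $\{x(t),x(t)\}=-1$ followed by a short differentiation. The only genuinely geometric inputs are Gauss--Bonnet and the first variation formula; everything else reduces to elementary manipulation of the functions $m$ and $n$ of (\ref{mn1})--(\ref{mn3}).

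For the area I apply Theorem~\ref{gauss} to the digon $P$. Its boundary has two smooth links: the projection (\ref{pr}), which by statement 3) of section~\ref{lor} has length $t_1$ and by Proposition~\ref{kap} has constant geodesic curvature of absolute value $|\beta|$, and the shortest arc $[x(0)x(t_1)]$, a geodesic contributing nothing to $\int\kappa\,ds$. The two interior angles are equal, because a curve of constant geodesic curvature is invariant under the reflection of $L^2$ across the perpendicular bisector of any of its chords, and this reflection interchanges the two vertices of $P$; denote their common value by $\psi$. Orienting $\partial P$ so that $P$ lies on the right and using $K\equiv -1$, Theorem~\ref{gauss} reads $|\beta|t_1+2(\pi-\psi)=2\pi+S(t_1)$, that is $S(t_1)=|\beta|t_1-2\psi$.

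Since $n(0)=m(0)=0$ gives $x(0)=w_0=(1,0,0)^T$, formula (\ref{pr}) and the definition of the pseudoscalar product yield $\{x(0),x(t_1)\}=-(1+n)$, so (\ref{dist}) gives $r=\Arcch((1+n)(t_1))$. Differentiating and using $n'=m$ (as in the proof of Proposition~\ref{kap}) together with $(1+n)^2-1=n(n+2)$ produces $r'(t_1)=m/\sqrt{n(n+2)}$. To obtain $r'(t_1)=\cos\psi$ I invoke the first variation of arclength: the endpoint $x(t_1)$ moves with unit speed along the projection (statement 3), and $r'$ equals the cosine of the angle between this velocity and the forward unit tangent at $x(t_1)$ of the minimizing geodesic issuing from $x(0)$; that angle is exactly the interior angle of $P$ at $x(t_1)$, which by the symmetry noted above equals $\psi$.

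For the remaining assertions I first use that $x(t)\in L^2$, i.e. $\{x(t),x(t)\}=-1$, which by (\ref{pr}) is $-(1+n)^2+m^2+\beta^2n^2=-1$, equivalently $m^2=2n+(1-\beta^2)n^2$; hence $n(n+2)-m^2=\beta^2n^2$ and $\sin\psi=|\beta|\,n/\sqrt{n(n+2)}$. Differentiating $S=|\beta|t_1-2\psi$ and using $n'=m$ and $m'=1+(1-\beta^2)n$ (read off from (\ref{mn1})--(\ref{mn3})), the simplifications $m^2-nm'=n$ and $m^2+\beta^2n^2=n(n+2)$ give $\psi'=|\beta|/(n+2)$, whence $S'(t_1)=|\beta|\,n/(n+2)>0$ for $t_1>0$. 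Finally $\psi<\pi/2$ is equivalent to $\cos\psi>0$, i.e. to $m>0$, which holds for all $t_1>0$ in cases II and III and for $0\le t_1<\pi/\sqrt{\beta^2-1}$ in case IV, while $\psi>0$ is equivalent to $\cos\psi<1$, i.e. to $n(n+2)-m^2=\beta^2n^2>0$, automatic for $\beta\neq0$. The step I expect to require the most care is the bookkeeping in Gauss--Bonnet: fixing the orientation of $\partial P$ so that the geodesic-curvature term enters as $+|\beta|t_1$, and confirming that both base angles of the digon coincide with the first-variation angle $\psi$.
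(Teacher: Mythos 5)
Your proposal is correct, and for three of the four assertions it coincides with the paper's argument: the area formula comes from Theorem~\ref{gauss} applied to the digon with the geodesic chord contributing nothing and the projection contributing $|\beta|t_1$ by Proposition~\ref{kap}; the formula $r=\Arcch((1+n)(t_1))$ comes from (\ref{pr}) and (\ref{dist}); and $r'(t_1)=\cos\psi$ is the first variation of arclength (what the paper calls the existence of the strong angle), after which differentiation of $\Arcch(1+n)$ gives $m/\sqrt{n(n+2)}$. Where you genuinely diverge is the monotonicity $S'(t_1)>0$. The paper obtains $\psi'(t_1)=|\beta|/(n+2)$ geometrically: it records the hyperbolic sector formulas $l(r,\alpha)=\alpha\sh r$ and $S(r,\alpha)=\alpha(\ch r-1)$, observes that the area swept as $t_1$ increases satisfies $S'(t_1)=(\ch r-1)\psi'(t_1)=n\psi'(t_1)$, and equates this with $S'(t_1)=|\beta|-2\psi'(t_1)$ from the Gauss--Bonnet identity to solve for $\psi'$ (formulas (\ref{dpsi}), (\ref{der})). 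You instead differentiate the already-established identity $\cos\psi=m/\sqrt{n(n+2)}$, using $n'=m$, $m'=1+(1-\beta^2)n$ and the hyperboloid relation $m^2=2n+(1-\beta^2)n^2$ (equivalently $\{x(t),x(t)\}=-1$), arriving at the same $\psi'=|\beta|/(n+2)$ and $S'=|\beta|n/(n+2)>0$. Your route is purely computational and self-contained, avoiding the infinitesimal sector-sweeping step that the paper leaves implicit; the paper's route is shorter and explains geometrically why $\psi'=|\beta|/(\ch r+1)$. As a bonus, your intermediate identity $\sin\psi=|\beta|n/\sqrt{n(n+2)}$ is exactly what the paper needs later in formulas (\ref{2phic}) and (\ref{ne}). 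Your treatment of the range of $\psi$ via the sign of $m$ is also slightly more direct than the paper's continuity argument from $\lim_{t_1\to+0}\psi(t_1)=0$, and the remaining orientation bookkeeping in Gauss--Bonnet that you flag is handled in the paper only by the reduction to $\beta>0$ via Remark~\ref{change}, so you are no less rigorous there than the original.
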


\begin{proof}
In consequence of remark \ref{change} one can assume that $\beta > 0.$ Segment
$[x(0)x(t_1)]$ has geodesic curvature $0.$ Then the first equality in  (\ref{exist}) is a direct corollary of proposition \ref{kap} and theorem  \ref{gauss} for $K=-1$, the second equality follows from formulae (\ref{pr}) and (\ref{dist}), the next one is a well-known statement of Riemannian geometry
(on existence of strong angle), the last equality is result of differentiation of 
second equality in (\ref{exist}). Inequalities $0 < \psi(t_1)< \pi/2$ are valid in indicated cases in view of the last equality in (\ref{exist}), first formulae in  (\ref{mn1}), (\ref{mn2}), (\ref{mn3}) and equality $\lim_{t_1\rightarrow +0}\psi(t_1)=0.$

Let us prove the rest of the statement. It is known that in $L^2$
\begin{equation}
\label{l1}
l(r,\alpha)= \alpha \sh r,
\end{equation}
\begin{equation}
\label{S}
S(r,\alpha)=\int_0^r \alpha \sh s ds = \alpha \ch s|_0^r= \alpha (\ch r-1),
\end{equation}
where $l(r,\alpha)$ is the length of arc of circle of radius $r$ with central angle $\alpha \leq 2\pi,$ and $S(r,\alpha)$ is area of corresponding sector. From here,  (\ref{exist}), and second formulae in (\ref{mn1}), (\ref{mn2}), (\ref{mn3})
follow relations
$$S'(t_1)=(\ch r-1)\psi'(t_1)=|\beta|-2\psi'(t_1),$$
\begin{equation}
\label{dpsi}
\psi'(t_1)=\frac{|\beta|}{\ch r+1}=\frac{|\beta|}{n+2},
\end{equation}
\begin{equation}
\label{der}
\quad S'(t_1)=\frac{|\beta|n}{n+2}(t_1)>0, \quad t_1>0.
\end{equation}
\end{proof}

\begin{proposition}
\label{zer1}
1)If $\beta\neq 0$ then geodesic segment (\ref{ge}) is noncontinuable shortest arc 
when its projection (\ref{pr}) is a) one time passing circle $C$ bounding disc
with area $S(t_1)\leq \pi$ or b) curve without self-intersections bounding together with the shortest arc $[x(0)x(t_1)]$ in $L^2$ digon $P$ in $L^2$ with area
$S(t_1) =\pi$.

2) For every $\beta\neq 0$ there is unique $t_1 > 0$ such that the condition 
a) or b) is satisfied; a) is satisfied only if $|\beta|\geq 3/\sqrt{5}.$
\end{proposition}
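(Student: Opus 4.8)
The plan is to work entirely downstairs in $L^2$ via the submetry $p$ and the parallel-vector-field description from section~\ref{lor}. By Remark~\ref{change} I may assume $\beta>0$. The endpoint $\gamma(t_1)$ is completely determined by its base point $x(t_1)=p(\gamma(t_1))\in L^2$ together with the parallel transport $f(\gamma(t_1))$ of $v_0$ along the projection $x$; moreover, since $p$ is a submetry and horizontal lifts preserve length, $d(e,\gamma(t_1))$ equals the minimal length of a curve in $L^2$ joining $w_0$ to $x(t_1)$ along which the parallel transport of $v_0$ produces the prescribed final direction $f(\gamma(t_1))$. Thus the whole problem reduces to a constrained length-minimization in the Lobachevskii plane.

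First I would record the holonomy principle. If $y_1,y_2$ are two curves from $w_0$ to $x(t_1)$, the parallel transports of $v_0$ along them differ by a rotation whose angle is, up to sign, the area enclosed between $y_1$ and $y_2$; this follows from Theorem~\ref{gauss} with $K=-1$. Since such a rotation fixes $x(t_1)$, it lies in the stabilizer $SO(2)=\exp(\mathbb{R}c)$ and is therefore only defined modulo $2\pi$. Consequently a competing curve realizes the same endpoint $\gamma(t_1)$ precisely when the signed area it encloses with the geodesic chord $g=[w_0\,x(t_1)]$ is congruent modulo $2\pi$ to the area $S(t_1)$ enclosed by the projection $x$ itself (the latter being exactly the $S(t_1)$ of Proposition~\ref{area}, by Proposition~\ref{kap} and Gauss--Bonnet).

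The decisive step is the isoperimetric comparison: among curves joining two fixed points and enclosing a prescribed signed area with their chord, the length is minimized by the unique arc of constant geodesic curvature, and this minimal length is a strictly increasing function of the absolute value of the enclosed area. By Proposition~\ref{kap} the projection $x$ itself has constant geodesic curvature $\beta$, so it is exactly this minimizer for the area $S(t_1)$. Combining this with the holonomy principle, $\gamma|_{[0,t_1]}$ is a shortest arc exactly when $S(t_1)$ is the representative of smallest absolute value in its class modulo $2\pi$, i.e. exactly when $S(t_1)\le\pi$. When $S(t_1)>\pi$ the arc enclosing $2\pi-S(t_1)<\pi$ on the opposite side of $g$ is strictly shorter and reaches the same point, so $\gamma$ fails to be shortest; this is why the threshold value is $\pi$.

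It then remains to produce, at the critical time, a second shortest arc (hence noncontinuability) and to separate cases a) and b). In the digon case, which by Proposition~\ref{area} always holds in cases II, III and for $0\le t_1<2\pi/\sqrt{\beta^2-1}$ in case IV, at $S(t_1)=\pi$ the reflection of $x$ across the chord $g$ has the same length $t_1$ and encloses area $-\pi$; because rotation by $+\pi$ equals rotation by $-\pi$, this reflected arc lifts to a distinct geodesic reaching the same endpoint, so two shortest arcs meet and $\gamma|_{[0,t_1]}$ is noncontinuable, which is b). In case IV the projection can instead close into a circle at $t_1=T=2\pi/\sqrt{\beta^2-1}$, where $\gamma(T)\in SO(2)$ equals $\exp(\sigma c)$ with $\sigma=S(T)$ the disc area; by the isoperimetric comparison the full circle is the shortest loop producing this holonomy iff $S(T)\le\pi$, and noncontinuability is automatic because, by Remark~\ref{Ad}, $\gamma_{\phi_0}(t)=\exp(\phi_0c)\,\gamma_0(t)\,\exp(-\phi_0c)$ while $\exp(\phi_0c)$ commutes with $\exp(\sigma c)$, so the entire one-parameter family $\{\gamma_{\phi_0}|_{[0,T]}\}$ consists of distinct shortest arcs to the single point $\exp(\sigma c)$, which is a). Finally, existence and uniqueness of $t_1$ follow from $S(0)=0$, the strict monotonicity $S'(t_1)>0$ of Proposition~\ref{area}, and the fact that $S\to\infty$ in cases II, III while $S\to S(T)$ in case IV; the elementary computation $S(T)=2\pi(\ch\rho-1)$, where $\rho$ is the radius of the circle and $\beta\sh\rho=\ch\rho$, gives $S(T)\le\pi\iff\beta^2\ge 9/5$, i.e. $|\beta|\ge 3/\sqrt5$, which is exactly when case a) occurs. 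The main obstacle is the isoperimetric comparison together with the careful bookkeeping modulo $2\pi$ that converts it into the sharp dichotomy between $S(t_1)\le\pi$ and $S(t_1)>\pi$; once this is in hand, the reflection symmetry and the rotational family $\Ad(\exp(\phi_0c))$ dispose of noncontinuability and of the split into a) and b).
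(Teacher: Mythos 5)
Your overall architecture is viable and your endgame coincides with the paper's: the reflection of the digon across the chord to produce a second shortest arc in case b), the rotational family $\Ad(\exp(\phi_0 c))$ (which fixes $\gamma(t_1)\in SO(2)$ by commutativity) to produce infinitely many shortest arcs in case a), and the computation $S(2\pi/\sqrt{\beta^2-1})\leq\pi\iff|\beta|\geq 3/\sqrt{5}$ together with the monotonicity $S'(t_1)>0$ from Proposition~\ref{area} for existence and uniqueness of $t_1$. However, the step you yourself call decisive --- the isoperimetric comparison, i.e.\ that among \emph{all} curves in $L^2$ with fixed endpoints enclosing a prescribed signed area with their chord, the constant-curvature arc is the unique length minimizer, and that this minimal length is strictly increasing in the absolute value of the area --- is asserted, not proved, and it is the hardest part of your argument. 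You need it in a strong form: since you compare against arbitrary horizontal curves, the competitors may self-intersect, so ``enclosed area'' must be read algebraically with multiplicity, and the strict monotonicity in $|A|$ is what produces the sharp threshold at $\pi$. As written this is a genuine gap, and it is a Dido-type result at least as deep as the proposition itself.

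The paper sidesteps this machinery by using that a competing shortest arc $\gamma_2$ in $(SO_0(2,1),d)$ is itself a geodesic, so by Corollary~\ref{geoc} its projection has constant geodesic curvature. In case a) the competitor's projection is then a shorter simple closed circle bounding a disc of area $S(t_2)<S(t_1)\leq\pi$, and Theorem~\ref{gauss} separates the two holonomies. In case b), after normalizing by Remarks~\ref{change} and~\ref{Ad} so that both projections lie on one side of the chord, the two digons are convex with boundaries meeting only along the chord, so the shorter curve is nested inside $P$ and encloses area strictly less than $\pi$; Gauss--Bonnet again separates the holonomies. Thus the global isoperimetric inequality is replaced by a convexity/nesting comparison between two constant-curvature arcs. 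To repair your write-up you should either prove the chordal (signed-area) isoperimetric inequality in $L^2$, or more economically first invoke that shortest arcs are geodesics (Theorem~\ref{general}, \cite{Ber1}) and then run the nesting argument. If completed, your route does buy something extra: it identifies the cut time by comparison with all horizontal curves at once, not only with geodesic competitors.
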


\begin{proof}
1) a) It is clear that $\gamma(t_1)\in SO(2).$ Then in consequence of remark  \ref{Ad} for the same $\beta$ and any $\phi_0$, segment of geodesic 
(\ref{sol}) under $t\in [0,t_1]$ joins the same points as (\ref{ge}). Consequently every continuation of the segment (\ref{ge}) is not a shortest arc.

Let us suppose that there exists a shortest arc $\gamma_2(t),$ 
$0\leq t\leq t_2 < t_1,$ in $(SO_0(2,1),d)$ which joins points $\gamma(0)=e$ and
$\gamma(t_1).$ Then projection $x_2(t)=p(\gamma_2(t)),$ $0\leq t\leq t_2,$ is one time passing circle $C_2$ in $L^2$ with length $t_2 < t_1$ and therefore bounds
a disc with area $S(t_2)< S(t_1)\leq \pi.$ Consequently on the ground of the Gauss-Bonnet theorem results of parallel translations of nonzero vectors along $C$ and $C_2$ in $L^2$ are different. Then $\gamma_2(t_2)\neq \gamma(t_1)$ 
in view of geometric interpretation of geodesics in $(SO_0(2,1),d)$ given in section \ref{lor}, a contradiction.

b) Let $P'$ be a digon, symmetric to the digon $P$ relative to segment $x(0)x(t_1).$ Since $S(t_1)=\pi$ then by the Gauss-Bonnet theorem results of parallel translations in $L^2$ of tangent vectors along closed paths, bounding $P$ and $P',$ are equal. Therefore on the ground of remarks \ref{change}, \ref{Ad} and geometric interpretation of geodesics in $(SO_0(2,1),d),$ given in section \ref{lor},
a curve in $L^2,$ symmetric to the projection (\ref{pr}) of segment (\ref{ge}) relative to segment $x(0)x(t_1),$ is presented in the form $p(\gamma_1(t)),$ 
$0\leq t \leq t_1,$ where $\gamma_1$ is geodesic in $(SO_0(2,1),d)$ such that  
$\gamma_1(0)=\gamma(0),$ $\gamma_1(t_1)=\gamma(t_1).$ Consequently every continuation of the segment (\ref{ge}) is not a shortest arc.

Let us suppose that there is a shortest arc $\gamma_2(t),$ $0\leq t\leq t_2 < t_1,$ in $(SO_0(2,1),d),$ joining points $\gamma(0)=e$ and $\gamma(t_1).$ Then
in consequence of remarks \ref{change} and \ref{Ad} we can assume that curves
(\ref{pr}) and $x_2(t)=p(\gamma_2(t)),$ $0\leq t\leq t_2,$ lie on the one side
of the shortest arc $[x(0)x(t_1)]$ and join ends of this shortest arc. Consequently on the ground of proposition \ref{geoc} the digon $P$ and digon $P_2,$ bounded by the shortest arc $[x(0)x(t_1)]$ and the curve $x_2(t),$ $0\leq  t \leq t_2,$ are convex, moreover intersection of their boundaries is the shortest arc 
$[x(0)x(t_1)]$  because $t_2 < t_1.$ Therefore in view of last inequality the curve 
$x_2(t),$ $0< t < t_2,$ lies inside $P$ and $S(t_2)< S(t_1)=\pi,$ where $S(t_2)$ is area of the digon $P_2.$ Consequently on the ground of the Gauss-Bonnet theorem
results of parallel translations of nonzero tangent vectors along boundaries of $P$ and $P_2$ in $L^2$ are different. Then $\gamma_2(t_2)\neq \gamma(t_1)$
in view of geometric interpretation of geodesics in $(SO_0(2,1),d),$ given in section \ref{lor}, a contradiction.

2) On the basis of corollary \ref{geoc} and first equality in (\ref{exist}) the condition a) is fulfilled only if 
$$\beta^2 > 1,\quad S\left(\frac{2\pi}{\sqrt{\beta^2-1}}\right)= |\beta|\frac{2\pi}{\sqrt{\beta^2-1}} - 2\pi \leq \pi
\Leftrightarrow |\beta|\geq \frac{3}{\sqrt{5}}.$$
If $0< |\beta|< \frac{3}{\sqrt{5}}$ then in consequence of proposition \ref{area} there exists unique $t_1> 0$ for which the condition b) is satisfied.
\end{proof}

Later for every number $\beta\neq 0$ we shall find a number $t_1=t_1(\beta),$ satisfying conditions of proposition \ref{zer1}.

II) On the basis of first formula in (\ref{exist}) it must be
\begin{equation}
\label{ds}
\pi=|\beta|t_1-2\psi,\quad \frac{|\beta|t_1}{2}=\frac{\pi}{2}+\psi,\quad \frac{\pi}{2} < \frac{|\beta|t_1}{2} < \pi.
\end{equation}
We deduce from formulae (\ref{ds}), (\ref{mn3}) and last equality in (\ref{exist}) that
\begin{equation}
\label{2phis}
\sin \left(\frac{|\beta|t_1}{2}\right)=\cos \psi=
\frac{\sqrt{1-\beta^2}\ch(t_1\sqrt{1-\beta^2}/2)}{\sqrt{\ch^2(t_1\sqrt{1-\beta^2}/2)-\beta^2}},
\end{equation}
\begin{equation}
\label{2phic}
-\cos \left(\frac{|\beta|t_1}{2}\right) = \sin \psi =
\frac{|\beta|\sh (t_1\sqrt{1-\beta^2}/2)}{\sqrt{\ch^2(t_1\sqrt{1-\beta^2}/2)-\beta^2}},
\end{equation}

III) On the basis of (\ref{mn1}) and first formula in (\ref{exist}) we must have the same formulae (\ref{ds}) for $|\beta|=1$ and
\begin{equation}
\label{fin}
\sin\left(\frac{t_1}{2}\right)=\cos \psi=\frac{1}{\sqrt{1+(t_1/2)^2}}.
\end{equation}
\begin{equation}
\label{fin1}
-\cos\left(\frac{t_1}{2}\right)=\sin \psi =\frac{t_1/2}{\sqrt{1+(t_1/2)^2}}, \quad \frac{\pi}{2} < \frac{t_1}{2} < \pi.
\end{equation}

IV) The least positive number $t_0$ such that $x(t_0)=w_0$ is equal to
$2\pi/\sqrt{\beta^2-1}.$ In addition
$\lim_{t\rightarrow t_0-0}\psi(t)=\pi$ and $S(t_0)=(2\pi|\beta|/\sqrt{\beta^2-1})-2\pi$ by (\ref{exist}). $S(t_0)=2\pi$ (respectively $S(t_0)=\pi$) if $|\beta|=2/\sqrt{3}$ (respectively $|\beta|=3/\sqrt{5}$). Let us consider three cases:\\
a) $|\beta|\geq 3/\sqrt{5}, \quad$ b) $1< |\beta| \leq 2/\sqrt{3}, \quad,$ c) $2/\sqrt{3}< |\beta| < 3/\sqrt{5}.$

a) In this case in view of proposition \ref{zer1}
\begin{equation}
\label{ne2}
t_1=2\pi/\sqrt{\beta^2-1}.
\end{equation}

b) In this case there is a unique number $t_1$ such that 
$0< t_1 \leq \pi/\sqrt{\beta^2-1}$ and $S(t_1)=\pi.$ Then in consequence of
(\ref{mn2}) and proposition \ref{area}
\begin{equation}
\label{betat}
\pi = |\beta|t_1 -2 \psi,\quad 0< \psi \leq \pi/2,\quad \frac{\pi}{2}< \frac{|\beta|t_1}{2}=\frac{\pi}{2}+\psi \leq \pi.
\end{equation}
\begin{equation}
\label{nec}
\sin\left(\frac{|\beta|t_1}{2}\right)= \cos\psi=
\frac{\sqrt{\beta^2-1}\cos(t_1\sqrt{\beta^2-1}/2)}{\sqrt{\beta^2-\cos^2(t_1\sqrt{\beta^2-1}/2)}},
\end{equation}
\begin{equation}
\label{ne}
-\cos\left(\frac{|\beta|t_1}{2}\right)= \sin\psi=
\frac{|\beta|\sin(t_1\sqrt{\beta^2-1}/2)}{\sqrt{\beta^2-\cos^2(t_1\sqrt{\beta^2-1}/2)}}.
\end{equation}

c) We get the same formulae (\ref{nec}), (\ref{ne}) but in this case
\begin{equation}
\label{betat1}
\pi/\sqrt{\beta^2-1}< t_1 < 2\pi/\sqrt{\beta^2-1},\quad \pi/2< \psi <  \pi,\quad \pi<
\frac{|\beta|t_1}{2}=\frac{\pi}{2}+\psi < \frac{3\pi}{2}.
\end{equation}

\begin{theorem}
\label{mon}
Let (\ref{ge}), where $t_1=t_1(|\beta|),$ is a noncontinuable shortest arc in 
$(SO_0(2,1),d).$ Then $t_1(|\beta|)$ is stronly decreasing function of $|\beta|>0.$
\end{theorem}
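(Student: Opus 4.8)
The plan is to reduce the assertion to a one–variable monotonicity statement in $b:=|\beta|>0$, using Remark \ref{change} to assume $\beta>0$. By Proposition \ref{zer1} the noncontinuable shortest time $t_1=t_1(b)$ is described in two regimes: for $b\ge 3/\sqrt5$ (case IV-a) it is explicit, $t_1=2\pi/\sqrt{b^2-1}$ (see (\ref{ne2})), while for $0<b<3/\sqrt5$ (cases II, III, IV-b, IV-c) it is the unique root of $S(t_1,b)=\pi$, where $S=bt_1-2\psi$ is the digon area of Proposition \ref{area}. I would prove strict monotonicity separately on $[3/\sqrt5,\infty)$ and on $(0,3/\sqrt5)$ and then glue, the two branches meeting at $b=3/\sqrt5$ with the common value $t_1=\pi\sqrt5$. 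On $[3/\sqrt5,\infty)$ there is nothing to do: $t_1=2\pi/\sqrt{b^2-1}$ has derivative $-2\pi b\,(b^2-1)^{-3/2}<0$.

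On $(0,3/\sqrt5)$ the tool is the implicit function theorem applied to $F(t_1,b):=S(t_1,b)-\pi$. Since $S$ is real-analytic and, by (\ref{der}), $\partial_{t_1}S=bn/(n+2)>0$ for $t_1>0$, the root $t_1(b)$ is smooth with
\[
t_1'(b)=-\frac{\partial_b S}{\partial_{t_1}S},
\]
so it suffices to control the sign of $\partial_b S$ at fixed $t_1$. Differentiating $S=bt_1-2\psi$ with $\tan\psi=\frac{b}{\sqrt{b^2-1}}\tan w$ and $w=t_1\sqrt{b^2-1}/2$ (case IV), the computation should collapse to
\[
\partial_b S=\frac{2\cos w\,(\sin w-w\cos w)}{\sqrt{b^2-1}\,(b^2-\cos^2 w)},
\]
while with $\tan\psi=\frac{b}{\sqrt{1-b^2}}\,\frac{\sh\omega}{\ch\omega}$ and $\omega=t_1\sqrt{1-b^2}/2$ (case II) it should give
\[
\partial_b S=\frac{2\,\ch\omega\,(\omega\,\ch\omega-\sh\omega)}{\sqrt{1-b^2}\,(\ch^2\omega-b^2)}.
\]
Here $\ch^2\omega-b^2=\sh^2\omega+(1-b^2)>0$ and $b^2-\cos^2 w>0$, and the elementary inequalities $\sin w-w\cos w>0$ on $(0,\pi)$ and $\omega\,\ch\omega-\sh\omega>0$ on $(0,\infty)$ (both by differentiating: the derivatives are $w\sin w$ and $\omega\,\sh\omega$) dispose of cases II, III and IV-b at once: there $\omega>0$, respectively $w\in(0,\pi/2]$ so $\cos w\ge 0$, whence $\partial_b S\ge 0$ and therefore $t_1'(b)<0$. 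Case III is the common limit $b\to1$ of the two formulas, reducing to $-\cot(t_1/2)=t_1/2$, and follows by continuity.

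The hard part — and the step I expect to be the genuine obstacle — is case IV-c, $2/\sqrt3<b<3/\sqrt5$, where the root of $S=\pi$ has $w\in(\pi/2,\pi)$. There $\cos w<0$ while $\sin w-w\cos w>0$, so the formula above forces $\partial_b S<0$, hence $t_1'(b)>0$; concretely, at the endpoints one finds $t_1(2/\sqrt3)=\pi\sqrt3$ (the semicircle bounding a half–disc of area $\pi$) and $t_1(3/\sqrt5)=\pi\sqrt5$ (the full disc of area $\pi$), so the area-$\pi$ prescription by itself makes $t_1$ \emph{increase} across this sub-interval. Thus on $(2/\sqrt3,3/\sqrt5)$ monotonicity cannot be read off from the digon of area $\pi$ alone; to obtain the asserted decrease one must sharpen the description of the cut time on this range — deciding whether an earlier cut (from a conjugate point or from a competing geodesic reaching the same fibre of $p$) preempts the $S=\pi$ digon — and reconcile it with Proposition \ref{zer1}. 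Pinning down exactly which $t_1(b)$ governs case IV-c is where the real content lies; once it is settled, strict decrease on $(0,3/\sqrt5)$ together with the matching value $t_1=\pi\sqrt5$ at $b=3/\sqrt5$ would close the argument.
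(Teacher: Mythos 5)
Your computations are correct, and your strategy is in essence the rigorous version of the paper's own argument: the paper also obtains $t_1$ for $0<|\beta|<3/\sqrt5$ from the relation $|\beta|t_1=\pi+2\psi$ and differentiates it in $|\beta|$. The difference is that the paper substitutes $\frac{d\psi}{d|\beta|}=\psi'(t_1)\frac{dt_1}{d|\beta|}$ using only (\ref{dpsi}), i.e.\ it treats $\psi$ as a function of $t_1$ alone and silently drops the explicit partial derivative $\partial\psi/\partial|\beta|$ --- precisely the term your $\partial_b S$ retains. That omission is how the paper arrives at $t_1=-\frac{|\beta|n}{n+2}\frac{dt_1}{d|\beta|}$ and hence $dt_1/d|\beta|<0$ uniformly on $(0,3/\sqrt5)$, with no case distinction at $w=\pi/2$.

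The obstruction you isolate in case IV-c is genuine, and you should not expect to remove it: it is visible without any differentiation. By the paper's own Proposition \ref{zer1} and formula (\ref{ne2}) one has $t_1(2/\sqrt3)=\pi\sqrt3$ (the half-circle bounding half of the disc of area $2\pi$) and $t_1(3/\sqrt5)=\pi\sqrt5$ (the full circle of area $\pi$), and $t_1$ is continuous on $[2/\sqrt3,3/\sqrt5]$ since $S$ is analytic with $\partial_{t_1}S>0$ there. Since $\pi\sqrt3<\pi\sqrt5$ while $2/\sqrt3<3/\sqrt5$, the function must increase somewhere on this interval, so Theorem \ref{mon} and Proposition \ref{zer1} cannot both hold as stated; your sign computation $\partial_b S<0$ for $w\in(\pi/2,\pi)$ shows the increase is in fact monotone on all of $(2/\sqrt3,3/\sqrt5)$. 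Thus the one unresolved point in your write-up is not a defect of your argument but an error in the paper: either the cut time in case IV-c is not the $S=\pi$ time (contradicting the proof of Proposition \ref{zer1}), or the monotonicity asserted in Theorem \ref{mon} fails on $(2/\sqrt3,3/\sqrt5)$. What you have correctly established is strict decrease of $t_1(|\beta|)$ on $(0,2/\sqrt3]$ and on $[3/\sqrt5,\infty)$, and that is all that can be salvaged of the statement.
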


\begin{proof}
In consequence of p. a) of proposition \ref{zer1} and formula (\ref{ne2}) this
statement is true for $|\beta|\geq 3/\sqrt{5}.$ If $0< |\beta| < 3/\sqrt{5}$ then
the second formula in (\ref{ds}) is valid. In consequence of it and (\ref{dpsi})
$$t_1+|\beta|\frac{dt_1}{d|\beta|}=2\psi'(t_1)\cdot\frac{dt_1}{d|\beta|}=\frac{2|\beta|}{n+2}\cdot\frac{dt_1}{d|\beta|},
\quad t_1=\frac{-|\beta|n}{n+2}\cdot\frac{dt_1}{d|\beta|}.$$
It follows from here and second formulae in (\ref{mn1}), (\ref{mn2}), (\ref{mn3}) that $dt_1/d|\beta|< 0,$ because
$0< t_1 < 2\pi/\sqrt{\beta^2-1}$ if $1< |\beta|< 3/\sqrt{5}.$ Thus theorem is proved.
\end{proof}

\begin{remark}
Canonical projection $p: L^2_1\rightarrow L^2$ is principal bundle with fiber  $SO(2).$ Since $L^2$ is diffeomorphic to $\mathbb{R}^2,$ there exists smooth unit vector field on $L^2$. Therefore the fibration $p$ is trivial and $L^2_1$,
$SO_0(2,1)$ are diffeomorphic to $\mathbb{R}^2\times S^1.$ Consequently
fundamental groups of Lie groups $SO_0(2,1)$ and $Sl(2)$ are isomorphic to 
$(\mathbb{Z},+)$. It would be interesting to find shortest arcs for locally isometric epimorphic covering by Lie groups $Sl(2)\rightarrow SO_0(2,1)$ and $\tilde{Sl}(2)\rightarrow Sl(2),$ where $\tilde{Sl}(2)$ is simply connected.
\end{remark}

\section{Cut locus and conjugate sets in $(SO_0(2,1),d)$}
\label{cut}

Unlike Riemannian manifolds, exponential map $\Exp$ and its restriction $\Exp_x$
for sub-Riemannian manifold $(M,d)$ without abnormal  geodesics (as in the case of $(SO_0(2,1),d)$) are defined not on $TM$ and $T_xM$ but only on $D$ and $D(x)$,
where $D$ is distribution on $M,$ taking part in definition of $d.$ Otherwise
cut locus and conjugate sets for such sub-Riemannian manifold are defined in the same way as for Riemannian one \cite{GKM}.

\begin{definition}
\label{cutl}
Cut locus $C(x)$ (respectively conjugate set $S(x)$) for a point $x$ in sub-Riemannian manifolds $M$ (without abnormal geodesics) is the set of ends
of all noncontinuable beyond its ends shortest arcs starting at the point $x$ (respectively, image of the set of critical points of the map $\Exp_x$
with respect to $\Exp_x$).
\end{definition}

\begin{theorem}
\label{clcl}
For every element $g\in (SO_0(2,1),d),$ $C(g)=gC(e)$ and $S(g)=gS(e).$ Moreover $S(g)\subset C(g),$
\begin{equation}
\label{cutle}
C(e)=\{\gamma_{\beta}(t_1(\beta)): \beta\in \mathbb{R}\},
\end{equation}
\begin{equation}
\label{conle}
S(e)=\{\gamma_{\beta}(t_1(\beta)): \beta^2\geq 9/5\}= SO(2)\smallsetminus \{e\};
\end{equation}
$S(e)$ is diffeomorphic to $\mathbb{R};$
\begin{equation}
\label{os}
\overline{S(e)}=S(e)\cup \{e\}=SO(2),
\end{equation}
$\overline{S(e)}$ is diffeomorphic to circle $S^1$;
\begin{equation}
\label{ocs}
\overline{C(e)\smallsetminus S(e)}= (C(e)\smallsetminus S(e))\cup \left\{\gamma_{\beta}(t_1(\beta))=
\gamma_{-\beta}(t_1(-\beta)): \beta = \frac{3}{\sqrt{5}}\right\},
\end{equation}
$\overline{C(e)\smallsetminus S(e)}$ is diffeomorphic to real projective plane without a point
$RP^2\smallsetminus \{\infty\};$  $C(e)$ is homeomorphic to
$(RP^2\smallsetminus \{\infty\})\cup \mathbb{R},$ where $(RP^2\smallsetminus \{\infty\})\cap \mathbb{R}$ is one-point set;
$\overline{C(e)}$ is homeomorphic to $(RP^2\smallsetminus \{\infty\})\cup S^1,$
where $(RP^2\smallsetminus \{\infty\})\cap S^1$ is one-point set.
\end{theorem}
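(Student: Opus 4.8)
The plan is to assemble the theorem from the description of noncontinuable shortest arcs obtained in Section~\ref{vuch} together with a direct analysis of the singularities of the sub-Riemannian exponential map $\Exp_e$. First I would record the equivariance statements: since $d$ is left-invariant, every left translation $L_g$ is an isometry carrying shortest arcs from $e$ to shortest arcs from $g$ and commuting with $\Exp$ under $d(L_g)$, so $C(g)=gC(e)$ and $S(g)=gS(e)$ follow formally from Definition~\ref{cutl}. The inclusion $S(e)\subset C(e)$ is the standard fact that a normal geodesic stops being minimizing at or before its first conjugate time, so I read $S(e)$ as the set of \emph{first} conjugate points that are simultaneously cut points. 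By statement~4) of Section~\ref{lor} and Remark~\ref{Ad} it suffices to analyze the arcs $\gamma_\beta$ of the form (\ref{ge}) (that is, $\phi_0=0$) and then spread the endpoints by the conjugation action $B\mapsto\Ad B$ of $SO(2)$; Proposition~\ref{zer1} and Theorem~\ref{mon} already single out, for each $\beta\neq 0$, the unique cut time $t_1(\beta)$, so $C(e)$ is the union over $\beta\neq 0$ of the $SO(2)$-conjugacy class of $\gamma_\beta(t_1(\beta))$, i.e.\ the family (\ref{cutle}) (each endpoint understood together with its $SO(2)$-conjugates).

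Next I would pin down $S(e)$ by locating where $\Exp_e$ degenerates. Writing geodesics as $\gamma_{\phi_0,\beta}(t)=B_{\phi_0}\gamma_\beta(t)B_{\phi_0}^{-1}$, the derivative $\partial_{\phi_0}$ produces a Jacobi field vanishing at $t=0$; it vanishes again exactly when the endpoint is independent of $\phi_0$, i.e.\ when $\gamma_\beta(t)\in SO(2)$, which by Corollary~\ref{geoc} occurs precisely in case~IV when the projecting circle closes, at $t_0=2\pi/\sqrt{\beta^2-1}$. A short computation using Corollary~\ref{zero} (there $m=n=0$ at $t_0$) gives $\gamma_\beta(t_0)=\Exp\!\big(\tfrac{2\pi\beta}{\sqrt{\beta^2-1}}\,c\big)$, a pure rotation, so all conjugate points lie in $SO(2)$. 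Among them the ones that are also cut points are, by Proposition~\ref{zer1}, exactly those with $|\beta|\geq 3/\sqrt5$, i.e.\ $\beta^2\geq 9/5$, which proves (\ref{conle}). Monotonicity of $\beta\mapsto 2\pi\beta/\sqrt{\beta^2-1}$ for $\beta>0$ shows this angle sweeps $(0,\pi]$ as $\beta$ runs over $[3/\sqrt5,\infty)$; combining with $\beta<0$ via Remark~\ref{change} fills the punctured circle, giving $S(e)\cong\mathbb{R}$, the values $\beta=\pm 3/\sqrt5$ meeting at the rotation by $\pi$. Adjoining the limit $e$ (angle $\to 0$) yields $\overline{S(e)}=SO(2)\cong S^1$, proving (\ref{os}).

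The core of the proof is the topology of the remaining, non-conjugate cut points $\overline{C(e)\smallsetminus S(e)}$, coming from the digon cases II, III and IV with $0<|\beta|<3/\sqrt5$. Here I would parametrize these cut points by $(\phi_0,\beta)$ and determine all coincidences. By Proposition~\ref{zer1}(1)(b) each such point is reached by \emph{two} distinct minimizers, the digon arc and its mirror across the chord $[x(0)x(t_1)]$; by Proposition~\ref{kap} the mirror has opposite geodesic curvature, so the two minimizers are interchanged by the involution $\beta\mapsto-\beta$ of Remark~\ref{change} together with a suitable rotation of $\phi_0$. This makes the endpoint map two-to-one, and I would glue the parameter domain accordingly. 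The two degenerations fix the global shape: as $\beta\to 0$ the cut time $t_1(\beta)\to\infty$ (Theorem~\ref{mon}) and the endpoint escapes to infinity in the noncompact group, producing the missing point $\{\infty\}$; as $|\beta|\to 3/\sqrt5$ one has $m,n\to 0$, so $x(t_1)\to w_0$ and the entire $\phi_0$-circle of endpoints collapses to the single rotation by $\pi$, the one point adjoined in (\ref{ocs}). The main obstacle is verifying that the mirror identification is \emph{orientation-reversing} on the collapsing family, so that the resulting closed surface carries a cross-cap and is $RP^2$ rather than $S^2$ or a disc; this I would settle by tracking the winding of the mirror direction $\phi_0^{(1)}(\phi_0,\beta)$ as $\beta$ traverses $(-3/\sqrt5,3/\sqrt5)$ and as $\phi_0$ runs once around $S^1$, exhibiting explicitly the non-orientable gluing that identifies $\overline{C(e)\smallsetminus S(e)}$ with $RP^2\smallsetminus\{\infty\}$.

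Finally I would assemble the global pictures. The set $S(e)=SO(2)\smallsetminus\{e\}$ and the surface $\overline{C(e)\smallsetminus S(e)}$ share exactly the rotation by $\pi$ (the only digon limit lying in $SO(2)$), so $C(e)=(RP^2\smallsetminus\{\infty\})\cup\mathbb{R}$ with one-point intersection, as claimed; replacing the open arc $S(e)$ by its closure $SO(2)\cong S^1$ adjoins $e$ and gives $\overline{C(e)}\cong(RP^2\smallsetminus\{\infty\})\cup S^1$, again meeting in the single rotation by $\pi$. Throughout, $S(g)=gS(e)$ and $C(g)=gC(e)$ reduce everything to the point $e$, and $S(e)\subset C(e)$ is visible from the two parametrizations since $\{\beta^2\geq 9/5\}\subset\{\beta\neq 0\}$.
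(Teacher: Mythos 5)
Your overall route coincides with the paper's: left invariance for the equivariance statements, Proposition \ref{zer1} and Theorem \ref{mon} for (\ref{cutle}), the closing of the projected circle for (\ref{conle}), and the mirror identification $\gamma_{(\beta,\phi_0)}(t_1(\beta))=\gamma_{(-\beta,-\beta t_1+\phi_0+\pi)}(t_1(-\beta))$ for the structure of $C(e)\smallsetminus S(e)$ --- this last identity is exactly the one the paper's proof cites. But the step you yourself call the ``main obstacle'' is a genuine gap, and the mechanism you propose for closing it does not work as described. The mirror involution sends $(\beta,\phi_0)$ to $(-\beta,\phi_0+\pi-\beta t_1(\beta))$ and therefore \emph{exchanges} the two connected components $\{0<\beta<3/\sqrt5\}\times S^1$ and $\{-3/\sqrt5<\beta<0\}\times S^1$ of the parameter space; the circle $\beta=0$ is absent, since by Lemma \ref{zer} those geodesics have no cut point and $t_1(\beta)\to\infty$ by Theorem \ref{mon}. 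The quotient of two disjoint annuli by an involution that swaps them is a single annulus, so there is no connected domain on which an orientation-reversing gluing could create a cross-cap; and since the whole circle $\{|\beta|=3/\sqrt5\}\times S^1$ of endpoints collapses to the single rotation by $\pi$, your construction as written yields a half-open annulus with one boundary circle collapsed to a point, i.e.\ something homeomorphic to $\mathbb{R}^2$ rather than to $RP^2\smallsetminus\{\infty\}$. To obtain a punctured projective plane you would need an additional identification acting \emph{within} one component, i.e.\ a coincidence $\gamma_{(\beta,\phi_0)}(t_1)=\gamma_{(\beta',\phi_0')}(t_1')$ with $\beta\beta'>0$; the strict monotonicity of $t_1(|\beta|)$ together with the injectivity of $\phi_0\mapsto B_{\phi_0}gB_{\phi_0}^{-1}$ for $g\notin SO(2)$ appears to exclude this. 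Either such an identification must be produced, or the homeomorphism type must be established by a different argument; ``tracking the winding of the mirror direction'' cannot manufacture non-orientability on a disconnected double cover.

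Two smaller points. Your justification of $S(e)\subset C(e)$ by ``a geodesic stops minimizing at or before its first conjugate time'' runs in the wrong direction: that fact places conjugate points at or \emph{beyond} the cut time, so it does not by itself put critical values of $\Exp_e$ inside $C(e)$. Here the inclusion is a set-theoretic consequence of the computations: every critical value is a nontrivial rotation, and every nontrivial rotation is the cut point of some geodesic with $|\beta|\geq 3/\sqrt5$. Relatedly, Definition \ref{cutl} takes $S(e)$ to be the image of \emph{all} critical points of $\Exp_e$, not only of first conjugate points that happen to be cut points; exhibiting the single Jacobi field $\partial_{\phi_0}$ shows that the points of $SO(2)\smallsetminus\{e\}$ are critical values, but the equality in (\ref{conle}) also requires showing that the differential of $\Exp_e$ is nondegenerate wherever $\gamma_{(\beta,\phi_0)}(t)\notin SO(2)$, which your proposal does not address.
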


\begin{proof}
First statement is a corollary of left invariance of the metric $d$ on $SO_0(2,1).$ Inclusion $S(g)\subset C(g),$ formulae
(\ref{cutle}), (\ref{conle}), equality in brace from (\ref{ocs}), and diffeomorphism $S(e)\cong \mathbb{R}$ are corollaries from the proof of 
proposition~\ref{zer1} and remark \ref{Ad}. Formula (\ref{os}) and diffeomorphism
$\overline{S(e)}\cong S^1$ follow from formula (\ref{conle}). Equality (\ref{ocs}) follows from formulae (\ref{cutle}), (\ref{conle}); 
$\overline{C(e)\smallsetminus S(e)}\cong RP^2\smallsetminus \{\infty\}$ follows from equalities $\gamma_{(\beta,\phi_0)}(t_1(\beta))=\gamma_{(-\beta,-\beta t_1+\phi_0 + \pi)}(t_1(-\beta))$ for $|\beta|\leq 3/\sqrt{5}.$
Now it is not difficult to prove remaining statements.
\end{proof}

\begin{remark}
It follows from (\ref{os}) and equalities $C(g)=gC(e),$ $S(g)=gS(e)$ that 
$g\in gSO(2)=\overline{S(g)}\subset \overline{C(g)}$ for all $g\in SO_0(2,1)$ 
while $x\notin \overline{C(x)}$ and $x\notin \overline{S(x)}$ for any point 
$x$ of arbitrary smooth Riemannian manifold. This constitutes radical difference
of Riemannian and sub-Riemannian manifolds.
\end{remark}

\end{document}